\newtheorem{theorem}{Theorem}[section]%{\bf}{\it}
\newtheorem{corollary}[theorem]{Corollary}%{\bf}{\it}
\newtheorem{definition}[theorem]{Definition}%{\bf}{\rm}
\newtheorem{lemma}[theorem]{Lemma}%{\bf}{\it}
\newtheorem{proposition}[theorem]{Proposition}%{\bf}{\it}
\newtheorem{definition-lemma}[theorem]{Definition-Lemma}%{\bf}{\it}
\theoremstyle{definition}
\newtheorem{remark}[theorem]{Remark}
\newtheorem{question}[theorem]{Question}%{\bf}{\rm}
\newcommand{\QED}{}%\hfill$\square$}
\newcommand\R{\mathbb{R}}
\newcommand\Q{\mathbb{Q}}
\newcommand\Z{\mathbb{Z}}
\newcommand\eps{\varepsilon}
\newcommand\bs{\bigskip}
\newcommand\ms{\medskip}
\newcommand\noi{\noindent}
\newcommand{\cn}{{}_{\mathrm{lcm}}}
\newcommand{\ol}[1]{\overline{#1}}
\DeclareMathOperator{\Null}{Null}
\DeclareMathOperator{\mult}{mult}
\DeclareMathOperator{\Int}{Int}
\DeclareMathOperator{\Supp}{Supp}
\DeclareMathOperator{\Bs}{Bs}
\DeclareMathOperator{\ra}{\rightarrow}
\DeclareMathOperator{\Exc}{Exc}
\DeclareMathOperator{\NE}{NE}
\DeclareMathOperator{\NM}{NM}
\DeclareMathOperator{\mov1}{\text{mov}^1}
\DeclareMathOperator{\bmov1}{\text{b-mov}^1}
\DeclareMathOperator{\bNM}{bNM}
\DeclareMathOperator{\Nu^1}{N^1}
\DeclareMathOperator{\Num1}{N_1}
\DeclareMathOperator{\Nef}{Nef}
\DeclareMathOperator{\Mob}{Mob}
\DeclareMathOperator{\Eff}{Eff}
\DeclareMathOperator{\Amp}{Amp}
\DeclareMathOperator{\Div}{Div}
\begin{document}

\title{On the dual of the mobile cone.}

\author{Sung Rak Choi}
\address{Department of Mathematics,
University of California, Riverside.
900 University Ave, Riverside, CA 92521}
\email{schoi@math.ucr.edu}

\maketitle

\begin{abstract}
We prove that the mobile cone and the cone
of curves birationally movable in codimension $1$ are dual to each other
in the $(K+B)$-negative part for a klt pair $(X,B)$.
This duality of the cones gives a partial answer to the problem posed by Sam Payne.
We also prove the cone theorem and the contraction theorem
for the expanded cone of curves birationally movable in codimension $1$.
\end{abstract}
\bigskip

\section{Introduction}
\label{section-intro}
Let $X$ be a normal projective algebraic variety defined over
an algebraically closed field $k$ (of characteristic $0$).
It is well-known that due to Kleiman-Seshadri, the cone of nef divisors
$\Nef(X)\subseteq \Nu^1(X)$ and the cone of curves (often called the Mori cone)
$\ol{\NE}(X)\subseteq\Num1(X)$ are dual to each other.
It is also well-known that due to Boucksom-Demaillly-Paun-Peternell \cite{bdpp},
the cone of pseudoeffective divisors $\ol\Eff(X)\subseteq\Nu^1(X)$
and the cone of movable curves $\ol\NM(X)\subseteq\Nu^1(X)$ are dual to each other:
$$
\xymatrix{
\ol{\Eff}(X)&\supseteq &\Nef(X)\\
\ol{\NM}(X)\ar@{<->}[u]^{\text{dual}}&\subseteq &\ol{\NE}(X)\ar@{<->}[u]^{\text{dual}}.
}
$$
The next most important cone in $\Nu^1(X)$ is probably the \emph{mobile cone} $\ol\Mob(X)$,
the closed convex cone spanned by all the numerical classes of mobile divisors.
Mobile divisors are the divisors whose $\R$-base loci (see Section \ref{section-movable cone}) are of codimension $\geq 2$.
The mobile cone $\ol\Mob(X)$ is a subcone of $\ol\Eff(X)$ which contains the nef cone $\Nef(X)$:
$\Nef(X)\subseteq\ol{\Mob}(X)\subseteq \ol\Eff(X)$.
It is natural to ask what the dual of the mobile cone $\ol{\Mob}(X)$ is.
In this paper, we will find a partial answer to this question.

A naive candidate for the dual of the mobile cone $\ol{\Mob}(X)$ is the
closed convex cone $\ol\NM^1(X)\subseteq\Num1(X)$ spanned by the classes of curves movable in
codimension $1$ subvarieties.
However, Payne in \cite{payne} showed that in general the cone $\ol\NM^1(X)$ is strictly smaller
than the dual $\ol\Mob(X)^{\vee}$.
He also showed that in the case of complete $\Q$-factorial toric varieties,
we have to also allow the classes of curves movable in codimension $1$ subvarieties
on \emph{$\Q$-factorial small modifications} of $X$
in order to obtain the correct dual of $\ol\Mob(X)$ (\cite[Theorem 2]{payne}).
Following his ideas, we will give a partial generalization of his result
for $\Q$-factorial klt pairs $(X,B)$ where $X$ is not necessarily toric, which is valid in
the $(K+B)$-negative part of the cone (Theorem \ref{thrm-mob bnm dual}).
This also gives a partial answer to the problem posed
in \cite{payne} for $\Q$-factorial Fano type varieties (Corollary \ref{cor-FT mov}).

Let $f:X\dashrightarrow X'$ be a small birational map between $\Q$-factorial normal projective varieties.
Since it is known that $\Nu^1(X)$ and $\Nu^1(X')$ are isomorphic
under $f_*$ \cite[12-2-1]{matsuki},
their dual spaces $\Num1(X)$ and $\Num1(X')$, respectively,  are also isomorphic:
$\Num1(X)\cong\Num1(X')$.
Under this isomorphism, a class $\alpha=[C]\in\Num1(X')$ defined by
a $\mov1$(\emph{movable in codimension $1$})-curve $C$ on $X'$ can be pulled back to $\Num1(X)$
and we can simply consider $\alpha$ as a class in $\Num1(X)$.
The $\mov1$-curve $C$ on $X'$ is called a
$\bmov1$(\emph{birationally movable in codimension $1$})-curve of $X$.
We define $\ol\bNM^1(X)$ as the closed convex cone in $\Num1(X)$
spanned by all the classes of $\bmov1$-curves of $X$.
See Section \ref{section-movable cone} for details.

We have the following partial duality result.

\begin{theorem}\label{thrm-mob bnm dual}
Let $(X,B)$ be a $\Q$-factorial klt pair. Then
$$
\ol\NE(X)_{K+B\geq 0}+\ol{\Mob}(X)^{\vee}=
\ol\NE(X)_{K+B\geq 0}+\ol{\bNM}^1(X).
$$
\end{theorem}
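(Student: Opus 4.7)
The plan is to prove the two inclusions separately. The inclusion $\supseteq$ reduces to the easy direction $\ol{\bNM}^1(X)\subseteq\ol{\Mob}(X)^{\vee}$: for any $\bmov1$-curve $C$, realized as a $\mov1$-curve on some $\Q$-factorial small modification $f:X\dashrightarrow X'$, and for any mobile divisor $D$ on $X$, the strict transform $f_*D$ remains mobile on $X'$ because a small modification affects only codimension-$\geq 2$ loci, so the $\R$-base locus of $f_*D$ still has codimension $\geq 2$. Since $C$ is $\mov1$, its deformations cover a subvariety of codimension $\leq 1$, so a general deformation of $C$ avoids this base locus, and hence $f_*D\cdot C\geq 0$. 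This equals $D\cdot C$ under the identification $N_1(X)\cong N_1(X')$.

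For the harder inclusion $\subseteq$, the plan is to argue by convex separation. Suppose for contradiction that some $\alpha\in\ol{\Mob}(X)^{\vee}$ is not in $\ol{\NE}(X)_{K+B\geq 0}+\ol{\bNM}^1(X)$. Then Hahn--Banach produces a divisor class $D\in N^1(X)$ with $D\cdot\alpha<0$ while $D$ is nonnegative on both $\ol{\NE}(X)_{K+B\geq 0}$ (condition (a)) and $\ol{\bNM}^1(X)$ (condition (b)). Since $\alpha\in\ol{\Mob}(X)^{\vee}$, a contradiction would follow from the following key lemma: \emph{any divisor class satisfying (a) and (b) lies in $\ol{\Mob}(X)$.}

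To prove the key lemma, I would run a $(K+B)$-MMP directed by $D$. By (a), any extremal ray $R\subseteq\ol{\NE}(X)$ with $D\cdot R<0$ must be $K+B$-negative, hence contractible by the contraction theorem for klt pairs. If the contraction of $R$ is divisorial with exceptional divisor $E$, then curves in $R$ sweep out $E$ and therefore form a $\mov1$-family, giving $R\subseteq\ol{\NM}^1(X)\subseteq\ol{\bNM}^1(X)$; condition (b) then forces $D\cdot R\geq 0$, a contradiction. The same reasoning rules out Mori fibre contractions, since the fibre curves cover $X$ and are $\mov1$. Hence every $D$-negative $K+B$-negative extremal ray must be small. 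Taking the flip $X\dashrightarrow X^{+}$, which is a $\Q$-factorial small modification of $X$, one verifies that the strict transform $D^{+}$ on $X^{+}$ still satisfies (a) and (b): the new ray $R^{+}$ is $K+B^{+}$-positive with $D^{+}\cdot R^{+}>0$, the other relevant extremal rays are shared with $X$, and $\ol{\bNM}^1$ is invariant under the small-modification equivalence. Iterating the procedure, after finitely many flips we expect to reach a small modification $X^{*}$ on which $D^{*}$ is nef, hence mobile; pulling back, $D$ itself is mobile on $X$, as desired.

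The main obstacle is termination of this $D$-directed flip iteration, which in full generality rests on the still conjectural termination of flips. One can hope to bypass this by running $(K+B)$-MMP with scaling of an auxiliary ample class (invoking known termination of $(K+B+\eps A)$-MMP in the relevant range), or by appealing to finiteness of small modifications in the Fano-type setting (the content of Corollary \ref{cor-FT mov}); in the general $\Q$-factorial klt case, a careful perturbation of $D$ is likely required to reduce to a regime where the needed finiteness is available.
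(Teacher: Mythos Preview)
Your easy direction matches the paper's nonnegative-pairing lemma. For the hard direction your approach differs from the paper's and, as written, has a gap. The paper dualizes to $\Nu^1(X)$: it takes a big $D\in\partial\ol\Mob(X)\cap\Int P$ (with $P=\Nef(X)+\R_{\geq 0}[K+B]$) assumed to lie in $\Int(\ol\NM^1(X,X')^{\vee})$ for every small modification $X'$, writes $D\equiv K+B+H$, and uses BCHM as a black box to pass to a small modification $Y$ with $D_Y$ nef---smallness being automatic because $D\in\ol\Mob(X)$ already, so $\mathbf B_-(D)$ has no divisorial part. The contradiction then comes from a separate lemma proved via Nakamaye's theorem and the Khovanskii--Teissier inequality: a nef divisor lying in $\Int(\ol\NM^1(Y)^{\vee})$ cannot have a divisorial component in $\mathbf B_+$, whereas $D\in\partial\ol\Mob(X)$ forces one, and that component survives the small map.

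Your step-by-step MMP would avoid Nakamaye and Khovanskii--Teissier, but the claim that condition (a) persists after a flip is wrong. In divisor terms (a) says $D\in\ol P$; after a flip $\Nef(X^{+})\neq\Nef(X)$, so $P^{+}\neq P$, and $\ol\NE(X^{+})$ can acquire new $(K^{+}+B^{+})$-nonnegative classes on which $D^{+}$ is negative---``the other relevant extremal rays are shared with $X$'' is simply false. The scaling fix you allude to actually repairs both this and termination simultaneously: use (a) once to write (after perturbing into $\Int P$ and rescaling) $D\equiv K+B+H$ with $H$ ample, then run the $(K+B)$-MMP with scaling of $H$. While the scaling parameter exceeds $1$ every contracted ray is automatically $D$-negative, and condition (b), which \emph{is} preserved because $\ol\bNM^1$ is invariant under small modifications, rules out divisorial and fibre steps. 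BCHM gives termination, and when the scaling reaches $1$ you are on a small modification with $D$ nef, hence $D\in\ol\Mob(X)$. Carried out, this is a legitimate alternative proof that trades the paper's $\mathbf B_+$-locus analysis for explicit MMP bookkeeping; the paper's version has the compensating advantage of treating BCHM purely as an existence statement, without tracking individual steps.
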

In other words, the dual cone $\ol\Mob(X)^{\vee}$ coincides with $\ol\bNM^1(X)$
at least in some portion of the $(K+B)$-negative part.
Inspired by the results in \cite{araujo}
and \cite{leh}, we also prove the following cone theorem for $\ol\bNM^1(X)$.

\begin{theorem}[The Cone Theorem for $\ol{\bNM}^1(X)$]\label{thrm-cone thrm NM^1}
Let $(X,B)$ be a $\Q$-factorial klt pair.
Then there exists a countable set of $\bmov1$-curves $\{C_i\}_{i\in I}$ of $X$ such that
$$
\ol\NE(X)_{K+B\geq 0}+\ol{\bNM}^1(X)=
\ol\NE(X)_{K+B\geq 0}+\ol{\sum_{i\in I} \R_{\geq 0} \cdot [C_i]}
$$
and for any ample $H$ and any $\varepsilon>0$,
there exists a finite subset $J\subseteq I$ such that
$$
\ol\NE(X)_{K+B+\eps H\geq 0}+\ol{\bNM}^1(X)=
\ol\NE(X)_{K+B+\eps H\geq 0}+\sum_{j\in J} \R_{\geq 0} \cdot [C_j].
$$
The rays $\{R_i=\R_{\geq 0}[C_i]\}_{i\in I}$ in the first equality can accumulate only at the hyperplanes
supporting both $\ol\NE(X)_{K+B\geq 0}$ and $\ol{\bNM}^1(X)$.
\end{theorem}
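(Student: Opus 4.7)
The plan is to imitate the Kawamata--Shokurov cone theorem for $\ol{\NE}(X)$, working on the curve side via the duality of Theorem~\ref{thrm-mob bnm dual} and using BCHM to run the MMP on the klt pair $(X,B)$. The strategy has three parts: (i) finiteness of extremal rays in each $(K+B+\eps H)$-negative slab, (ii) countable generation by taking a union over a dense family of slabs, and (iii) the accumulation statement as an immediate consequence of (i).

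For (i), fix ample $H$ and $\eps>0$ and let $R$ be an extremal ray of $\ol{\bNM}^1(X)$ with $(K+B+\eps H)\cdot R<0$. Since $H$ is ample, any such ray is automatically $(K+B)$-negative, so by Theorem~\ref{thrm-mob bnm dual} $R$ is also extremal in $\ol{\Mob}(X)^{\vee}$ in this region, hence dual to a supporting hyperplane of $\ol{\Mob}(X)$ cut out by a mobile $\Q$-Cartier class $M$ with $M\cdot R=0$. I would perturb by a small multiple of $M$ and run an MMP via BCHM on a suitable klt perturbation of $(X,B+\eps H)$; because $M$ is mobile, the MMP consists only of small birational modifications---a divisorial contraction would force us out of $\ol{\Mob}(X)$---and terminates in a Mori fiber space $\pi\colon X'\to Z$ on a $\Q$-factorial small modification $X\dashrightarrow X'$. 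The numerical class of a general fiber of $\pi$ pulled back to $\Num1(X)$ is then a $\bmov1$-class of $X$ spanning $R$. Finiteness of such rays in the slab follows from BCHM's finiteness of ample models for $(X,B+\eps H)$, since distinct extremal rays yield distinct Mori fiber space structures on small modifications.

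For (ii), choose a countable dense family of pairs $(H_n,\eps_n)$ with $H_n$ ample $\Q$-Cartier and $\eps_n\in\Q_{>0}$, and let $I$ be the union of the finite index sets produced in each slab by (i); $I$ is countable, and any class in $\ol{\bNM}^1(X)$ lying outside $\ol{\NE}(X)_{K+B\geq 0}$ sits in some slab $\{K+B+\eps_n H_n<0\}$, so $I$ suffices to generate the left-hand cone modulo $\ol{\NE}(X)_{K+B\geq 0}$. For (iii), any accumulation ray of $\{R_i\}$ must, by the finiteness in (i), lie in $\{K+B+\eps H\geq 0\}$ for every $\eps>0$, and therefore on a hyperplane supporting both $\ol{\NE}(X)_{K+B\geq 0}$ and $\ol{\bNM}^1(X)$. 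The main obstacle is the MMP extraction step in (i): I must show that a perturbed MMP can be set up to realize the prescribed ray $R$ at its terminal Mori fiber space and that only small modifications appear along the way. The delicate point is the compatibility between the mobile supporting divisor produced by Theorem~\ref{thrm-mob bnm dual} and the termination/finiteness statements of BCHM on the boundary of $\ol{\Mob}(X)$; once this is carried out, the rest of the argument follows classical cone-theorem templates.
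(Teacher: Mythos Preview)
The main gap is in step~(i). You assert that the MMP on a perturbation of $(X,B+\eps H)$, driven by the mobile supporting class $M$, terminates in a Mori fiber space. This is false in general. By Lemma~\ref{lem-quasibounding div} the supporting divisor is (up to scaling) of the form $K+B+cH'$ with $K+B+H'$ ample, so it lies on $\partial\ol\Mob(X)$ and is automatically pseudoeffective; when it is big---which is the generic situation---the MMP terminates in a log terminal model $X'$, \emph{not} a Mori fiber space. Your parenthetical ``a divisorial contraction would force us out of $\ol\Mob(X)$'' correctly shows that the intermediate steps are all flips, but it says nothing about the terminal outcome: a pseudoeffective log canonical divisor never yields a Mori fiber space.

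In the big case the $\bmov1$-curve has to be extracted differently. On the log terminal model $X'$ one passes to the log canonical model $\psi\colon X'\to Y$; since $M\in\partial\ol\Mob(X)$, Lemma~\ref{lem-D in baoundary Mob} and Proposition~\ref{prop-component under small} produce a divisorial component $E\subset\mathbf B_+(M_{X'})$, which is $\psi$-exceptional, and the curves contracted by $\psi$ sweep out $E$. These are $\mov1$-curves on the small modification $X'$, hence $\bmov1$-curves of $X$, and they span the given ray. This divisorial mechanism is precisely what distinguishes $\ol{\bNM}^1(X)$ from $\ol\NM(X)$: the new extremal rays come from divisorial contractions on small modifications, not from fibrations. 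As written, your argument only recovers the co-extremal rays of $\ol\NM(X)$ in the sense of \cite{araujo},\cite{leh}. Once you supply the big case, finiteness follows from Theorem~\ref{thrm-bchm decomposition} as you indicate, and your parts~(ii) and~(iii) go through.
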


Note that this is actually a structure theorem for the expanded cone
$\ol\NE(X)_{K+B\geq 0}+\ol\bNM^1(X)$ (see Figure \ref{figure} in Section \ref{section-cone theorem}).
We also prove the following contraction theorem for $\ol\bNM^1(X)$.
We call an extremal ray $R$ of $\ol{\bNM}^1(X)$ a \emph{$\mov1$-co-extremal ray for} $(X,B)$
if it is $(K+B)$-negative and it is also an extremal ray for the expanded cone
$\ol{\NE}(X)_{K+B\geq 0}+\ol{\bNM}^1(X)$.
See Section \ref{section-cone theorem} for details.

\begin{theorem}[Contraction Theorem for $\mov1$-co-extremal rays]\label{thrm-contraction thrm NM^1}
Let $(X,B)$ be a $\Q$-factorial klt pair.
Let $R$ be a $\mov1$-co-extremal ray of $\ol{\bNM}^1(X)$ for $(X,B)$.
Then the following hold:
\begin{enumerate}
\item[(1)] there exists a small birational map $\varphi:X\dashrightarrow X'$
and a contraction
$\psi:X'\ra Y$ which is either a divisorial contraction or a Mori fiber space
such that the $\mov1$-co-extremal ray $R$ is spanned by a $\mov1$-curve $C$ on $X'$ if and only if
$C$ is contracted by $\psi$, and

\item[(2)] the composition map $\psi\circ\varphi$ is uniquely determined by $R$.
\end{enumerate}
\end{theorem}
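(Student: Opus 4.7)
The plan is to use Theorem~\ref{thrm-mob bnm dual} to produce a mobile divisor supporting $R$ on $X$, run a small MMP to reach a $\Q$-factorial modification on which this divisor becomes nef, and then invoke the basepoint-free theorem to construct $\psi$; the $\mov1$-nature of the curves spanning $R$ will force $\psi$ to be either a divisorial contraction or a Mori fiber space.

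First, since $R$ is a $(K+B)$-negative extremal ray of the expanded cone $\ol{\NE}(X)_{K+B\geq 0}+\ol{\bNM}^1(X)$, Theorem~\ref{thrm-mob bnm dual} allows us to dualize and produce a $\Q$-Cartier mobile divisor $D$ on $X$ with $D\cdot R=0$, nonnegative on the expanded cone, and strictly positive on every other $(K+B)$-negative co-extremal ray in some neighborhood of $R$. After a small perturbation we may assume $(X,B+\eps D)$ is klt and that $R$ is the unique $(K+B+\eps D)$-negative co-extremal ray lying on the hyperplane $D^{\perp}$.

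We next run a $(K+B+\eps D)$-MMP with scaling of an ample divisor, which terminates by BCHM. The key claim, parallel to arguments in \cite{araujo} and \cite{leh}, is that this MMP is small: any divisorial contraction would have to contract a prime divisor lying in the diminished base locus of $D$, which is empty because $N_{\sigma}(D)=0$ by mobility of $D$. Hence every step is a flip, and we arrive at a small $\Q$-factorial modification $\varphi\colon X\dashrightarrow X'$ on which $D':=\varphi_{*}D$ is nef, orthogonal to $\varphi_{*}R$, and $K_{X'}+B'$ remains negative on $\varphi_{*}R$. Setting $L:=aD'-(K_{X'}+B')$ for $a\gg 0$ yields a nef and big divisor satisfying $L^{\perp}\cap\ol{\NE}(X')=\varphi_{*}R$, so the basepoint-free theorem applied to $(X',B')$ makes $L$ semiample and defines a contraction $\psi\colon X'\to Y$ contracting precisely the curves numerically proportional to $\varphi_{*}R$. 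By Theorem~\ref{thrm-cone thrm NM^1}, $\varphi_{*}R$ is spanned by a $\mov1$-curve $C$ on $X'$ that moves in a family covering at least a prime divisor; this rules out $\psi$ being small, leaving either a Mori fiber space (family covering $X'$) or a divisorial contraction (family covering a single prime divisor). This establishes~(1).

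For~(2), suppose $(\varphi_{i},\psi_{i}\colon X'_{i}\to Y_{i})$ $(i=1,2)$ are two factorizations as above. Each nef semiample divisor $D_{i}:=\varphi_{i,*}D$ on $X'_{i}$ has section ring canonically equal to $\bigoplus_{m\geq 0}H^{0}(X,mD)$, since small birational modifications preserve sections of mobile divisors. Hence both $Y_{i}$ are canonically identified with $\mathrm{Proj}\bigoplus_{m\geq 0}H^{0}(X,mD)$, and the composite rational maps $\psi_{i}\circ\varphi_{i}$ coincide. The main obstacle in this plan is the smallness claim in the MMP step: coupling mobility of $D$ with the co-extremality of $R$ (rather than mere $(K+B)$-negativity) is what forces the MMP to avoid any divisorial contraction, and this also explains why the theorem is restricted to the $(K+B)$-negative portion of the cone.
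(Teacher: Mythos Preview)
There is a genuine gap in the MMP step. You run a $(K+B+\eps D)$-MMP and then assert two things: (i) that it is small because ``any divisorial contraction would have to contract a prime divisor lying in the diminished base locus of $D$'', and (ii) that on the outcome $X'$ the pushforward $D'=\varphi_*D$ is nef. Neither follows. The steps of the $(K+B+\eps D)$-MMP contract divisors lying in $\mathbf B_-(K+B+\eps D)$, not in $\mathbf B_-(D)$; there is no reason these coincide, so mobility of $D$ does not prevent divisorial contractions here. And if the program terminates with a minimal model, what becomes nef is $K_{X'}+B'+\eps D'$, not $D'$ itself; your construction of $L=aD'-(K_{X'}+B')$ and the subsequent application of the basepoint-free theorem therefore have no foundation. (There is also a termination issue: BCHM requires $B+\eps D$ big, which you have not arranged.)

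The paper avoids all of this with one extra step you are missing: by Lemma~\ref{lem-quasibounding div}, any $\mov1$-co-bounding divisor $D$ for $R$ can be rescaled so that $D\equiv K+B+cH$ with $H$ ample and $K+B+H$ ample. One then runs the $(K+B+cH)$-MMP (Theorem~\ref{thrm-from bchm}); now the divisor being made nef is literally $D$, the boundary $B+cH$ is big so BCHM applies, and since $D\in\partial\ol\Mob(X)$ the exceptional divisors of the MMP lie in $\mathbf B_-(D)$, which has no divisorial part---hence $\varphi$ is small. On $X'$ the nef and big (or boundary-pseudoeffective) divisor $D'$ defines $\psi$ directly as the map to the log canonical model, whose uniqueness gives (2). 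Your outline becomes correct once you insert this rewriting of $D$ as an adjoint divisor; without it, the smallness and the nefness claims both fail.
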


\bs

This paper is organized as follows:

In section \ref{section-pre}, we review the definitions and properties of
the non-ample locus $\mathbf B_+(D)$ and non-nef locus $\mathbf B_-(D)$
of divisors $D$.
We also recall some necessary results from the theory of log minimal model program (LMMP).
In section \ref{section-movable cone}, we study the structure of the mobile cone $\ol\Mob(X)$.
The proof of Theorem \ref{thrm-mob bnm dual} is given in this section.
In section \ref{section-cone theorem}, we prove Theorem \ref{thrm-cone thrm NM^1}
and Theorem \ref{thrm-contraction thrm NM^1}.

\bs

I would like to thank S. Boucksom for pointing out an error in the preliminary version of this paper
and Z. Ran for answering my question.
I also would like to thank V. Shokurov for the encouragement.

\section{Preliminaries}\label{section-pre}

Let $X$ be a normal projective variety.
For a $\Z$-divisor $D$ on $X$, its base locus $\Bs(D)$ is defined as
the support of the intersection of the elements in the usual $\Z$-linear system
$|D|=\{D'\in\Div_\Z(X)\;|\; D\sim D'\geq 0\}$.
For a $\Q$-divisor $D$, the \emph{stable base locus} of $D$ is defined as
$\mathbf{B}(D):=\bigcap_m \Bs(mD)$ where
the intersection is taken over the positive integers $m$ such that
$mD$ are integral.
For an $\R$-divisor $D$ on $X$, the $\R$-linear system is defined as
$|D|_{\R}:=\{D'\in\Div_\R(X)\;|\;D\sim_{\R}D'\geq 0\}$ and its \emph{$\R$-stable base locus} as
$\mathbf{B}_{\R}(D):=(\cap|D|_{\R})_\text{red}$.
Clearly, $\mathbf{B}_{\R}(D)\subseteq\mathbf{B}(D)$ for a $\Q$-divisor $D$.
From now on, we always use $\R$-divisors unless otherwise stated.
\bs

For a divisor $D$ on $X$, we define the \emph{non-ample locus}
(or \emph{augmented base locus}) of $D$ as
$$
\mathbf{B}_+(D):=\bigcap_{\text{ample\;} A}\mathbf{B}(D- A)
$$
where the intersection is taken over all ample divisors $A$ such that $D-A$ are $\Q$-divisors.
Note that $\mathbf{B}_+(D)=X$ if $D$ is not big.
As the name suggests, $D$ is ample if and only if $\mathbf B_+(D)=\emptyset$.
We define the \emph{non-nef locus}
(or \emph{diminished base locus}) of $D$ as
$$
\mathbf{B}_-(D):=\bigcup_{\text{ample\;} A}\mathbf{B}(D+A)
$$
where the union is taken over all ample divisors $A$ such that $D+A$ are $\Q$-divisors.
Note that $\mathbf B_-(D)=X$ if $D$ is not pseudoeffective.
It is easy to see that $D$ is nef if and only if $\mathbf{B}_-(D)=\emptyset$.
It is known that $\mathbf{B}_+(D)=\mathbf{B}_+(D-A)=\mathbf B_-(D-A)$
for any sufficiently small ample divisor $A$ \cite[Proposition 1.21]{elmnp2}.
It is also well known that the base loci $\mathbf B_+(D),\mathbf B_-(D)$ depend only on the numerical
class of $D$ whereas $\mathbf B(D)$ is not in general.
The non-ample locus $\mathbf B_+(D)$ is Zariski closed for any $D$
whereas the non-nef locus $\mathbf B_-(D)$ is \emph{a priori} not in general.

\bs

\begin{remark}\label{remk-B_- not Zar closed}
The following inclusions are easy to verify and often useful:
for any ample divisor $A$,
$$
\mathbf B_+(D+A)\subseteq\mathbf B_-(D),\;\; \mathbf B(D+A)\subseteq\mathbf B_-(D),\;\text{and}\;\;
\mathbf B_+(D+A)\subseteq\mathbf B(D).
$$
It is easy to see that if $V\subseteq\mathbf B_-(D)$ for some subvariety $V$ of $X$,
then there exists a small ample divisor $A$ such that $V\subseteq\mathbf B_+(D+A)$.
Thus we can also define the non-nef locus as $\mathbf B_-(D):=\cup\mathbf B_+(D+A)$
where the union is taken over all ample divisors $A$.

\end{remark}

See \cite{bbp},\cite{elmnp},\cite{elmnp2},\cite{laza2} for more details about the non-ample loci
and non-nef loci.

\bs

A big divisor $D$ is \emph{$\R$-mobile} if $\mathbf{B}_\R(D)$ does not have a divisorial component.
We define the cones in the numerical space $\Nu^1(X)$:
$$
\begin{array}{cl}
\Amp(X)&:=\{[D]\in\Nu^1(X)\;|\; \text{$D$ is ample}\;\},\\
\Mob(X)&:=\{[D]\in\Nu^1(X)\;|\;\text{$D\equiv D'$ for some $\R$-mobile $D'$}\},\\
\Eff(X)&:=\{[D]\in\Nu^1(X)\;|\;\text{$D\equiv D'$ for some effective $D'$}\}.
\end{array}
$$
Their closures $\Nef(X)=\ol{\Amp}(X)$, $\ol{\Mob}(X)$, and $\ol{\Eff}(X)$ are
called the \emph{nef cone, mobile cone}, and \emph{pseudoeffective cone}, respectively.
They satisfy the inclusion:
$\Nef(X)\subseteq \ol{\Mob}(X)\subseteq\ol{\Eff}(X)$.
We will study the mobile cone $\ol\Mob(X)$ in detail using the base loci $\mathbf B_-, \mathbf B_+$
in Section \ref{section-movable cone}.

For a cone $V\subseteq \Num1(X)$, a divisor $D$ and $\square\in\{=,<, >,\geq,\leq\}$, we define
$$
V_{D \square\;0}:=V\cap \{C\in\Num1(X)\;|\;D\cdot C \;\square\; 0\}.
$$
An \emph{extremal face} $F$ of a closed convex cone $V$ satisfies the two conditions
1) $F$ is a convex subcone of $V$, and
2) if $v+u\in F$ for $u,v\in V$, then $u,v\in F$. A one dimensional extremal face is called an
\emph{extremal ray}.

\bs

We use the standard notions of singularities of pairs $(X,B)$ in the log minimal model
 program (LMMP, for short) \cite{komo},\cite{isksh}.
We briefly recall the basics of the LMMP.
For an exceptional prime divisor $E$ over $X$, $a(E,X,B)$ denotes the log discrepancy of
$(X,B)$ at $E$.
\begin{definition}
Let $(X,B)$ be a $\Q$-factorial klt pair and let $\varphi:X\dashrightarrow Y$ be a birational map
to a projective $\Q$-factorial variety $Y$. Let $B_Y:=f_*B$.
\begin{enumerate}
\item[(1)] A pair $(Y,B_Y)$ is called a \emph{log terminal model of} $(X,B)$ if the pair $(Y,B_Y)$ is klt, $K_Y+B_Y$ is
nef, and the inequality $1-\mult_EB<a(E,Y,B_Y)$ holds for any $\varphi$-exceptional prime divisor $E$.

\item[(2)] A pair $(Y,B_Y)$ equipped with a fibration $Y\rightarrow T$ is called a
\emph{Mori log  fibration of} $(X,B)$ if $(Y,B_Y)$ is klt, $\dim T<\dim Y$, the relative Picard number
$\rho(Y/T)=1$, and $-(K_Y+B_Y)$ is ample over $T$.
%\spade  and the inequality $1-\mult_EB\leq a(E,Y,B_Y)$ holds for any $\varphi$-exceptional prime divisor $E$.

\item[(3)] A \emph{resulting model of} $(X,B)$ is either a log terminal model (1) or a Mori log fibration (2).
\end{enumerate}
\end{definition}

By the LMMP,  any  $\Q$-factorial klt pair $(X,B)$ is expected to have a resulting model and
it cannot have both resulting models simultaneously \cite[2.4.1]{3fold-logmod}.
It is known that $(X,B)$ has a log terminal model as its resulting model
if and only if $K+B$ is pseudoeffective \cite{bchm}.

\bs

We will use the following lemma often.
\begin{lemma}\label{lem-ample klt}\emph{\cite[Example 9.2.29]{laza2}}
Let $(X,B)$ be a klt pair and $H$ be an ample divisor on $X$. Then there exists an effective divisor
$H'\sim_\R H$ such that $(X,B+H')$ is klt.
\end{lemma}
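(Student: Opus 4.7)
The plan is to produce $H'$ in the form $\tfrac{1}{m}D$ for a sufficiently general member $D$ of the linear system $|mH|$ on $X$, with $m$ chosen so large that $mH$ is very ample. The klt property of $(X,B+H')$ will then be verified by a discrepancy computation on a single log resolution that simultaneously resolves $(X,B)$ and $D$.

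First I would fix a log resolution $f:Y\to X$ of $(X,B)$; by the klt hypothesis every prime divisor $E$ on $Y$ satisfies $a(E,X,B)>-1$. I would then pick $m\gg 0$ so that $mH$ is very ample, and in particular $|f^*(mH)|$ is base-point free on the smooth variety $Y$. By Bertini on $Y$, a general member of this linear system may be written as $f^*D$ with $D\in |mH|$ an integral divisor whose strict transform to $Y$ is smooth, contains no $f$-exceptional component, and meets $f_*^{-1}B+\Exc(f)$ transversally. Setting $H':=\tfrac{1}{m}D$ yields an effective $\Q$-divisor with $H'\sim_\Q H$, hence $H'\sim_\R H$.

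It then remains to verify that $(X,B+H')$ is klt. The map $f$ is still a log resolution of the new pair, and the log discrepancies shift in a controlled way. For any $f$-exceptional prime $E$ one has $\mult_E(f^*H')=0$, so $a(E,X,B+H')=a(E,X,B)>-1$. For a prime divisor $P$ on $X$ contained in $\Supp B$, the transversality forces $\mult_P(D)=0$, so the coefficient of $P$ in $B+H'$ is unchanged and still lies in $[0,1)$. For $P=D$ itself the coefficient is $\tfrac{1}{m}<1$. Hence every log discrepancy exceeds $-1$, and $(X,B+H')$ is klt.

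The main obstacle is the Bertini step: one needs a general member of $|f^*(mH)|$ that is not only smooth but also meets the existing simple normal crossings divisor $f_*^{-1}B+\Exc(f)$ transversally, so that $f$ continues to serve as a log resolution of $(X,B+H')$. This is standard once $m$ has been taken large enough that $|f^*(mH)|$ is base-point free on $Y$, but it is the one place where a genuine geometric input is invoked.
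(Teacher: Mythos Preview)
The paper does not supply its own proof of this lemma; it merely cites \cite[Example 9.2.29]{laza2}. Your argument is precisely the standard Bertini-plus-discrepancy computation that underlies that reference, and it is correct as written when $H$ is a $\Q$-divisor.

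One small point worth noting: the paper declares that all divisors are $\R$-divisors unless stated otherwise, so $H$ may be an ample $\R$-divisor for which no multiple $mH$ is ever integral, and your step ``pick $m\gg 0$ so that $mH$ is very ample'' is not literally available. The fix is routine: since the ample cone is open, write $H=\sum_i c_i A_i$ with $c_i>0$ real and $A_i$ ample Cartier, then run your Bertini argument for each $A_i$ to produce general $D_i\in|m_iA_i|$ that are smooth, contain no exceptional component, and are transversal both to $f_*^{-1}B+\Exc(f)$ and to each other; setting $H'=\sum_i \tfrac{c_i}{m_i}D_i$ gives $H'\sim_\R H$ and the same discrepancy check goes through. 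With this adjustment your proof is complete.
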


If the pairs $(X,B), (X,B')$ are klt and $B\sim_\R B'$, then $(X,B)$ and $(X,B')$ have the same resulting models
by the LMMP. Therefore by Lemma \ref{lem-ample klt}, given a klt pair $(X,B)$ and an ample divisor $H$,
we may assume that $(X,B+H)$ is klt in order to run the LMMP or
to study the resulting models of $(X,B+H)$.

\bs

We review some necessary results from \cite{bchm}.
First, we state an important result about the decomposition of the following set:
$$
\mathcal{E}_H:=\{B\in U|\;B\geq 0, \;\text{$(X,B)$ is klt and $K+B+H$ is pseudo-effective}\;\}
$$
where $U$ is a finite dimensional subspace of real Weil divisors which is defined
over the rationals and $H$ is an ample divisor on $X$.

\begin{theorem}\label{thrm-bchm decomposition}\emph{\cite[Corollary 1.1.5]{bchm}}
Let $H$ be a rational ample divisor and
suppose that for some $B_0\in\mathcal E_H$, the pair $(X,B_0)$ is klt.
Then for any $B\in\mathcal{E}_H$, the pair $(X,B)$ has a  log terminal model. Furthermore,
there exist finitely many birational maps $\varphi_i:X\dashrightarrow X_i$ ($1\leq i\leq p$)
and the set $\mathcal E_H$ is decomposed into finitely many rational polytopes $\mathcal W_i$
$$
\mathcal E_H=\bigcup_i^p\mathcal W_i,
$$
satisfying the following condition:
if, for $B\in\mathcal E_H$, there exists a birational contraction $\varphi:X\dashrightarrow Y$
which is a log terminal model of $(X,B)$, then $\varphi=\varphi_i$ for some $1\leq i \leq p$.
\end{theorem}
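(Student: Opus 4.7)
The plan is to follow the strategy of \cite{bchm}, which proves existence of log terminal models and the polytope decomposition in a single nested induction on $\dim X$. For the existence assertion for each individual $B \in \mathcal{E}_H$, I would run the MMP with scaling of the ample divisor $H$. Since $K + B + tH$ is ample for $t \gg 0$ and is pseudoeffective by the definition of $\mathcal{E}_H$, one can continuously decrease $t$: at each critical value where nefness is lost, an extremal ray of $\ol{\NE}(X)$ is contracted (or flipped if the contraction is small). The klt hypothesis on $(X, B_0)$, combined with Lemma \ref{lem-ample klt} to perturb $H$ if necessary, ensures that the pair $(X, B+tH)$ remains klt throughout. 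Termination of this MMP with scaling is the main theorem of \cite{bchm} and produces the desired log terminal model $\varphi_B : X \dashrightarrow Y_B$.

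For the polytope decomposition, the key observation is that the MMP with scaling depends on the boundary $B$ in a piecewise-linear, rational manner: the extremal ray contracted at each step is determined by which numerical class of a curve pairs to zero with a divisor of the form $K+B+tH$, a condition cut out by linear inequalities in $(B, t) \in U \oplus \R$. Hence for $B$ near a reference point $B_0$ the same sequence of contractions and flips produces the same log terminal model. This local constancy, together with the compactness of $\mathcal{E}_H$ (which is itself a rational polytope, cut out by the linear conditions $B \geq 0$, the klt condition, and pseudoeffectivity of $K+B+H$), allows one to cover $\mathcal{E}_H$ by finitely many rational subpolytopes $\mathcal{W}_i$ on each of which the log terminal model is a fixed birational map $\varphi_i$.

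The main obstacle is termination of the MMP with scaling, which in \cite{bchm} requires an intricate nested induction on dimension, carried out simultaneously with the existence of pl-flips and finite generation of the log canonical ring. A secondary difficulty is checking that the list $\{\varphi_i\}_{i=1}^p$ exhausts every log terminal model arising from some $B \in \mathcal{E}_H$; this uses the fact that two log terminal models of the same klt pair are isomorphic in codimension one, so that on each $\mathcal{W}_i$ the choice of $\varphi_i$ is determined up to this ambiguity. Since the theorem is cited verbatim from \cite{bchm}, in practice I would simply invoke that paper rather than reconstruct the full induction.
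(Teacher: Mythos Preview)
Your proposal is appropriate: the paper does not give its own proof of this statement at all---it is quoted directly as \cite[Corollary 1.1.5]{bchm} and used as a black box. Your closing remark that one should simply invoke \cite{bchm} is exactly what the paper does, so the sketch you provide, while reasonable, goes beyond what is required here.
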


In \cite{chsh}, the similar decomposition problem
(which is called the \emph{geography}) is also studied in detail in terms of b-divisors.
The following is also the consequence of \cite{bchm}.

\begin{theorem}\label{thrm-from bchm}\emph{\cite{bchm}}
Let $X$ be a projective variety and let $D:=K+B$ where
$(X,B)$ is klt, $B$ is big and $K+B$ is pseudoeffective. Then there exists a birational map $\Phi:X\dashrightarrow X'$ such that
\begin{enumerate}
\item $D':=\Phi_*D$ is nef,
\item $D\geq D'$ (i.e. the inequality is satisfied after pulling back to the graph),
\item $\Phi$ is surjective in codimension $1$, and
\item a prime divisor $E$ of $X$ is contracted by $\Phi$ if and only if it is a divisorial component
of $\mathbf B_-(D)$.
\end{enumerate}
Furthermore, if $K+B$ is big, then there exists a contraction $\Psi:X'\rightarrow Y$,
(which is the unique log canonical model of $(X,B)$) where $D'$ vanishes on the curves contracted by $\Psi$.
\end{theorem}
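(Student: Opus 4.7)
The plan is to deduce everything by running a $(K+B)$-MMP with scaling of an ample divisor and invoking \cite{bchm}. Since $(X,B)$ is klt with $B$ big and $K+B$ pseudoeffective, this MMP terminates with a log terminal model $\Phi:X\dashrightarrow X'$; set $B':=\Phi_*B$ and $D':=K_{X'}+B'$. Part (1) is built in. Every MMP step is either a divisorial contraction (a morphism) or a flip (a small modification), so no divisor is ever extracted and $\Phi$ is surjective in codimension $1$, giving (3).

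For (2) and (4), fix a common log resolution $p:W\to X$ and $q:W\to X'$. Applying the negativity lemma step by step along the chain---for each divisorial contraction $\pi_i:X_i\to X_{i+1}$ contracting a divisor $E_i$ one has $K_{X_i}+B_i=\pi_i^*(K_{X_{i+1}}+B_{i+1})+a_iE_i$ with $a_i>0$ by klt, and for each flip the negativity lemma applied to both the flipping and flipped contractions yields the analogous inequality on a common resolution of that single step---and concatenating, one obtains
$$
p^*D=q^*D'+F,
$$
with $F\geq 0$ and $\Supp(F)$ equal to the union of the strict transforms on $W$ of the prime divisors of $X$ contracted by $\Phi$. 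Pushing forward proves (2). For (4), since $q^*D'$ is nef, its divisorial Zariski decomposition is trivial, so the displayed identity realises $F$ as the negative part of the divisorial Zariski decomposition of $p^*D$. By Nakayama's description of $\mathbf B_-(D)$ in terms of the $\sigma$-decomposition, the divisorial components of $\mathbf B_-(D)$ on $X$ then coincide with $p_*\Supp(F)$, which is precisely the set of prime divisors of $X$ contracted by $\Phi$.

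Finally, suppose $K+B$ is big. Then $D'$ is nef and big on the klt pair $(X',B')$, hence semi-ample by the Base Point Free Theorem; the resulting morphism $\Psi:X'\to Y$ is a contraction, $\Psi_*D'$ is ample on $Y$, and $(Y,\Psi_*B')$ is the (unique) log canonical model of $(X,B)$. By construction $D'$ has degree zero on every $\Psi$-contracted curve. The main technical obstacle lies in the part of (4) that identifies $F$ with the negative part $N_\sigma(p^*D)$: one must argue that the splitting produced by the MMP is not merely a ``nef plus effective'' decomposition but the minimal such, which follows from the log terminal model property combined with Nakayama's minimality characterization of the $\sigma$-decomposition.
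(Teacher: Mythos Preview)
The paper does not supply its own proof of this statement: it is quoted from \cite{bchm}, with the remark immediately following the theorem that condition (4) is ``a well-known reformulation of the `strict negativity' condition of \cite{bchm} by Kawamata \cite[Lemma 2]{kawa-remarks on div cone}'', the point being that Kawamata's \emph{numerically fixed part} of $D$ coincides with the divisorial part of $\mathbf B_-(D)$. Your argument is precisely a fleshing-out of that citation and is correct in outline.

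Two small imprecisions. First, $\Supp F$ can also contain divisors that are simultaneously $p$- and $q$-exceptional (those whose centres meet some flipping locus), not only the strict transforms of the $\Phi$-contracted divisors; this is harmless because such components die under $p_*$, and your use of $p_*\Supp F$ gives the right set. Second, the positivity $a_i>0$ at a divisorial step comes from the $(K+B)$-negativity of the contracted ray (equivalently the negativity lemma), not from the klt hypothesis.

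The substantive point, which you correctly flag, is upgrading $p^*D=q^*D'+F$ from ``nef plus effective'' to the actual $\sigma$-decomposition. The inequality $N_\sigma(p^*D)\le F$ is immediate from minimality of $N_\sigma$ once $q^*D'$ is nef. For the reverse, the operative feature is that $F$ is $q$-\emph{exceptional}: by the negativity lemma applied to $q$, every effective divisor numerically close to $p^*D$ must contain at least $F$, which forces $N_\sigma(p^*D)\ge F$. This is exactly the content of Kawamata's lemma that the paper invokes. Your closing sentence points at the right ingredients, but it is the $q$-exceptionality of $F$ rather than an abstract appeal to ``the log terminal model property'' that does the work.
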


Condition \emph{4} is a well-known reformulation of the ``strict negativity'' condition of \cite{bchm}
by Kawamata \cite[Lemma 2]{kawa-remarks on div cone}.
The prime divisors supported in the \emph{numerically fixed part} of $D$  in
\cite{kawa-remarks on div cone}
coincide with the divisorial components of $\mathbf B_-(D)$.
By conditions \emph{3} and \emph{4},
the map $\Phi$ is an isomorphism in codimension $1$if $D=K+B\in\ol\Mob(X)$.

\section{The mobile cone $\ol\Mob(X)$}\label{section-movable cone}

We will characterize the mobile cone $\ol\Mob(X)$ using the non-ample locus $\mathbf B_+$
and non-nef locus $\mathbf B_-$. We will also study the dual of $\ol\Mob(X)$.

\ms

Let $\Mob_+(X)$ (resp. $\Mob_-(X)$) be the cone in $\Nu^1(X)$ spanned by the divisors $D$
such that $\mathbf{B}_+(D)$ (resp. $\mathbf{B}_-(D)$) do not contain codimension $1$ subvarieties.
It is easy to see that $\mathbf{B}_-(D)\subseteq\mathbf{B}_{\R}(D)\subseteq\mathbf{B}_+(D)$.
Therefore $\Mob_+(X)\subseteq\Mob(X)\subseteq\Mob_-(X)$.

\begin{lemma}\label{lem-same mobile}
Let $\Mob_-(X),\Mob_+(X)$ be the cones in $\Nu^1(X)$ defined above.
\begin{enumerate}
\item Let $D\in\ol{\Mob_+}(X)$. Then $D+A\in\Mob_+(X)$ for any ample divisor $A$.

\item The cone $\Mob_+(X)$ is open and $\overline{\Mob_+}(X)=\Mob_-(X)$.
In particular, the cone $\Mob_-(X)$ is closed and
$$\ol{\Mob_+}(X)=\ol\Mob(X)=\Mob_-(X).$$
Furthermore, $\Int\Mob_-(X)=\Mob_+(X)$.
\end{enumerate}
\end{lemma}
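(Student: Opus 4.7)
The plan is to prove (1) first and deduce all of (2) from (1) together with three facts recalled in Section~\ref{section-pre}: the numerical stability $\mathbf B_+(D) = \mathbf B_+(D-A) = \mathbf B_-(D-A)$ for sufficiently small ample $A$; the inclusion chain $\mathbf B_+(D+A) \subseteq \mathbf B_-(D) \subseteq \mathbf B_+(D)$ for ample $A$; and the characterization in Remark~\ref{remk-B_- not Zar closed} that a subvariety $V$ lies in $\mathbf B_-(D)$ if and only if $V \subseteq \mathbf B_+(D+A)$ for some small ample $A$.

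For (1), given $D \in \ol{\Mob_+}(X)$ and $A$ ample, I would approximate $D$ numerically by a class $D' \in \Mob_+(X)$ close enough that $A' := A + (D-D')$ remains ample (using openness of the ample cone). Then $D+A \equiv D' + A'$, so $\mathbf B_+(D+A) = \mathbf B_+(D'+A') \subseteq \mathbf B_+(D')$ by the second fact, and the right-hand side has no codimension $1$ component by definition of $\Mob_+(X)$. Hence $D+A \in \Mob_+(X)$.

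For (2), I would first establish openness of $\Mob_+(X)$ via the identity $\Mob_+(X) = \ol{\Mob_+}(X) + \Amp(X)$: the inclusion $\supseteq$ is exactly (1); for $\subseteq$, given $D \in \Mob_+(X)$, the numerical stability fact furnishes a small ample $A_0$ with $\mathbf B_+(D-A_0) = \mathbf B_+(D)$, so $D-A_0 \in \Mob_+(X) \subseteq \ol{\Mob_+}(X)$ and $D = (D-A_0) + A_0$. Since $\Amp(X)$ is open, $\Mob_+(X)$ is a union of translates of $\Amp(X)$ and hence open. For the central equality $\ol{\Mob_+}(X) = \Mob_-(X)$: in the $\subseteq$ direction, a prime divisor $E \subseteq \mathbf B_-(D)$ with $D \in \ol{\Mob_+}(X)$ would by Remark~\ref{remk-B_- not Zar closed} lie in some $\mathbf B_+(D+A)$, contradicting $D+A \in \Mob_+(X)$ from (1); in the $\supseteq$ direction, if $D \in \Mob_-(X)$, then $\mathbf B_+(D+A) \subseteq \mathbf B_-(D)$ has no divisorial component for every ample $A$, so $D+A \in \Mob_+(X)$, and letting $A \to 0$ gives $D \in \ol{\Mob_+}(X)$.

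The remaining statements fall out: $\Mob_-(X)$ is closed as the closure of $\Mob_+(X)$; taking closures in the chain $\Mob_+(X) \subseteq \Mob(X) \subseteq \Mob_-(X)$ yields $\ol{\Mob_+}(X) = \ol\Mob(X) = \Mob_-(X)$; and for $\Int \Mob_-(X) = \Mob_+(X)$, the $\supseteq$ inclusion is just openness, while for $\subseteq$ any interior point $D$ satisfies $D - A \in \Mob_-(X)$ for some small ample $A$, whence $\mathbf B_+(D) = \mathbf B_-(D-A)$ has no codimension $1$ component. The main obstacle I expect is the $\subseteq$ direction of $\ol{\Mob_+}(X) \subseteq \Mob_-(X)$: since $\mathbf B_-(D)$ is an uncountable union of sets $\mathbf B_+(D+A)$, the hypothesis that each summand has no divisorial component does not a priori extend to the union, and one genuinely needs Remark~\ref{remk-B_- not Zar closed} to locate any divisorial component of $\mathbf B_-(D)$ inside a single $\mathbf B_+(D+A)$.
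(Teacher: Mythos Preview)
Your proof is correct and follows essentially the same route as the paper's. The only cosmetic difference is in the openness of $\Mob_+(X)$: the paper quotes \cite[Corollary~1.6]{elmnp2} directly, while you repackage the same stability fact $\mathbf B_+(D)=\mathbf B_+(D-A_0)$ (already recalled in Section~\ref{section-pre}) as the identity $\Mob_+(X)=\ol{\Mob_+}(X)+\Amp(X)$; both arguments for $\ol{\Mob_+}(X)=\Mob_-(X)$ and the remaining consequences are the same.
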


\begin{proof}
(1) Since $D\in\ol{\Mob_+}(X)$, there exists a sequence $D_i\in\Mob_+(X)$
such that $D_i\to D$ as $i\to \infty$.
For a fixed ample divisor $A$, by taking $i$ sufficiently large,
we may assume that $A-(D_i-D)$ is ample for all $i$.
Thus $\mathbf B_+(D_i)\supseteq\mathbf B_+(D_i+A-(D_i-D))=\mathbf B_+(D+A)$
and $D+A\in\Mob_+(X)$.

(2) The openness of the cone $\Mob_+(X)$ follows from \cite[Corollary 1.6]{elmnp2}:
there exists a small open neighborhood $N$ of $D$ such that for any $D'\in N$,
$\mathbf B_+(D')\subseteq\mathbf B_+(D)$.

To prove the inclusion $\overline{\Mob_+}(X)\subseteq\Mob_-(X)$,
let $D\in\ol{\Mob_+}(X)$.
If $D\not\in\Mob_-(X)$, then there exists a codimension $1$ subvariety $E$ of $X$ such that $E\subseteq\mathbf B_-(D)$.
By remark \ref{remk-B_- not Zar closed}, $E\subseteq\mathbf B_+(D+A)$ for some ample divisor $A$,
but it is a contradiction to (1).
The inclusion $\overline{\Mob_+}(X)\supseteq\Mob_-(X)$ can be seen as follows.
Let $D\in\Mob_-(X)$. Then for a fixed ample divisor $A$, $\{D+\frac 1m A\}$ is a sequence
in $\Mob_+(X)$ since $\mathbf B_+(D+\frac 1m A)\subseteq\mathbf B_-(D)$
(Remark \ref{remk-B_- not Zar closed}).
Thus the limit of the sequence must belong to $\ol{\Mob_+}(X)$,
i.e., $D\in\ol{\Mob_+}(X)$.
\QED
\end{proof}

\begin{lemma}\label{lem-D in baoundary Mob}
Let $D$ be a big divisor such that $D\in\partial\ol{\Mob}(X)$.
Then there exists a divisorial component $E\subseteq\mathbf B_+(D)$ such that
$E\not\subseteq\mathbf B_-(D)$.
In particular, for any ample divisor $A$ on $X$,
$D+A$ is $\R$-mobile and $D-A$ is not $\R$-mobile.
\end{lemma}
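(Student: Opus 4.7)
The strategy is to read the condition $D \in \partial \ol{\Mob}(X)$ through the equalities $\ol{\Mob}(X) = \Mob_-(X)$ and $\Int \Mob_-(X) = \Mob_+(X)$ supplied by Lemma \ref{lem-same mobile}, and to recover the $\R$-mobility assertions from these base-locus descriptions together with the inclusions $\mathbf{B}_-(\cdot) \subseteq \mathbf{B}_{\R}(\cdot) \subseteq \mathbf{B}_+(\cdot)$ plus a one-line convexity observation.

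First, to locate $E$, I note that $D \in \partial \ol{\Mob}(X)$ simultaneously forces $D \in \Mob_-(X)$ and $D \notin \Mob_+(X)$. The former says $\mathbf{B}_-(D)$ has no codimension-one component, and the latter says $\mathbf{B}_+(D)$ does contain one, say a prime divisor $E$. Together these give $E \subseteq \mathbf{B}_+(D)$ with $E \not\subseteq \mathbf{B}_-(D)$, which is the first assertion.

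For the ``in particular'' clause, fix an ample divisor $A$. The case of $D + A$ is immediate from Lemma \ref{lem-same mobile}(1): since $D \in \ol{\Mob_+}(X)$, we get $D + A \in \Mob_+(X)$, hence $\mathbf{B}_{\R}(D+A) \subseteq \mathbf{B}_+(D+A)$ has no divisorial component, and as $D+A$ is big we conclude $D+A$ is $\R$-mobile. For $D - A$, I use that any ample divisor belongs to $\Mob_+(X) = \Int \ol{\Mob}(X)$ because $\mathbf{B}_+(A) = \emptyset$. If $D - A$ were in $\ol{\Mob}(X)$, then by the standard convex-cone fact that an interior point plus any element of the cone is again interior, $D = A + (D - A)$ would lie in $\Mob_+(X)$, contradicting $D \in \partial \ol{\Mob}(X)$. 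Hence $D - A \notin \Mob_-(X)$, so $\mathbf{B}_-(D-A)$ has a divisorial component, and the inclusion $\mathbf{B}_-(D-A) \subseteq \mathbf{B}_{\R}(D-A)$ then forces a divisorial component in $\mathbf{B}_{\R}(D-A)$, so $D - A$ is not $\R$-mobile. (If $D - A$ fails to be big, then $\mathbf{B}_{\R}(D-A) = X$ and the conclusion is automatic.)

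I do not anticipate a serious obstacle: the proof is essentially the dictionary between boundary/interior in $\Nu^1(X)$ and the base-locus descriptions of $\ol{\Mob}(X)$ from Lemma \ref{lem-same mobile}. The only minor points to verify are that every ample divisor lies in $\Mob_+(X)$, which is immediate, and the trivial convex-cone statement $\Int \ol{\Mob}(X) + \ol{\Mob}(X) \subseteq \Int \ol{\Mob}(X)$, which follows from openness of $\Mob_+(X)$ and closure of $\ol{\Mob}(X)$ under addition.
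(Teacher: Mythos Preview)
Your proof is correct and follows essentially the same route as the paper: both use Lemma~\ref{lem-same mobile} to translate $D\in\partial\ol\Mob(X)$ into $D\in\Mob_-(X)\setminus\Mob_+(X)$, yielding the divisorial component $E\subseteq\mathbf B_+(D)$ with $E\not\subseteq\mathbf B_-(D)$. The only minor difference is in the $D-A$ case: the paper argues directly via the inclusion $\mathbf B_+(D)\subseteq\mathbf B_{\R}(D-A)$, whereas you use a convexity argument to show $D-A\notin\ol\Mob(X)$; both are short and valid.
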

\begin{proof}
If $D\in\partial\ol{\Mob}(X)$, then by Lemma \ref{lem-same mobile} $D\not\in\Mob_+(X)$ and
$D\in\Mob_-(X)$. Thus there exists a subvariety $E$ of codimension $1$ such that
$E\subseteq\mathbf B_+(D)$ and $E\not\subseteq\mathbf B_-(D)$.
Since $D$ is big, $E$ is an irreducible component of $\mathbf B_+(D)$.
It is easy to see that $D+A$ is $\R$-mobile by the definition of $\mathbf B_-(D)$ and
that $D-A$ is not $\R$-mobile since $\mathbf B_+(D)\subseteq\mathbf B_{\R}(D-A)$.
\QED
\end{proof}

\begin{proposition}\label{prop-component under small}
Let $f:X\dashrightarrow X'$ be a small birational map between  projective $\Q$-factorial varieties.
Suppose that for a big divisor $D$ on $X$,
there exists a divisorial component $E\subseteq\mathbf B_+(D)$.
Then $E':=f_*{E}$ is also a divisorial component of $\mathbf B_+(D')$ where
$D'=f_*D$.
\end{proposition}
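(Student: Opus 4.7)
The plan is proof by contradiction. Assuming that $E' := f_*E$ is not a divisorial component of $\mathbf{B}_+(D')$, I will produce an ample $\Q$-divisor $A$ on $X$ and an effective $\Q$-divisor $G \sim_\Q D-A$ with $E \not\subseteq \Supp(G)$, contradicting $E \subseteq \mathbf{B}_+(D)$. Since $\mathbf{B}_+(D') = \bigcap_{A' \text{ ample}} \mathbf{B}(D'-A')$, the hypothesis yields an ample $\Q$-divisor $A'$ on $X'$ and an effective $F' \sim_\Q D'-A'$ with $E' \not\subseteq \Supp(F')$. Fix also an ample $\Q$-divisor $A$ on $X$ together with $\eps > 0$ small enough that $A' - \eps f_*A$ remains ample on $X'$; this is possible because $A'$ lies in the open ample cone of $X'$.

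To transfer $F'$ to $X$, take a common resolution $p: W \to X$, $q: W \to X'$ of $f$. Since $f$ is small between $\Q$-factorial varieties, the $p$-exceptional and $q$-exceptional divisors coincide, so $p_* \circ q^* = f^{-1}_*$ on Weil divisors. Setting $F := f^{-1}_*F'$ and $A_X := f^{-1}_*A'$, pushing down the linear equivalence $q^*F' \sim_\Q q^*(D' - A')$ via $p_*$ gives $F \sim_\Q D - A_X$ on $X$, with $F \geq 0$ and $E \not\subseteq \Supp(F)$.

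The core step is to establish $\mathbf{B}(A_X - \eps A) \subseteq \Exc(f)$. The map $f$ restricts to an isomorphism $U \xrightarrow{\cong} f(U)$ between open subsets of $X$ and $X'$ whose complements have codimension $\geq 2$; combined with normality and the $\Q$-factoriality of $X, X'$, Hartogs' lemma yields, for $m$ sufficiently divisible,
\[
H^0\bigl(X, m(A_X - \eps A)\bigr) \;\cong\; H^0\bigl(X', m(A' - \eps f_*A)\bigr).
\]
By the choice of $\eps$, $A' - \eps f_*A$ is ample, hence $|m(A' - \eps f_*A)|$ is base-point-free on $X'$ for large $m$. Transferring through the isomorphism above, the base locus of $|m(A_X - \eps A)|$ on $X$ is contained in $X \setminus U \subseteq \Exc(f)$, which has codimension $\geq 2$. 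Therefore $E \not\subseteq \mathbf{B}(A_X - \eps A)$, and there exists an effective $\Delta \sim_\Q A_X - \eps A$ with $E \not\subseteq \Supp(\Delta)$.

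Combining, $G := F + \Delta \sim_\Q (D - A_X) + (A_X - \eps A) = D - \eps A$ is an effective $\Q$-divisor whose support avoids $E$. Since $\eps A$ is ample on $X$, this yields $E \not\subseteq \mathbf{B}(D - \eps A) \supseteq \mathbf{B}_+(D)$, the desired contradiction. The main obstacle is the section-theoretic comparison in the third paragraph, which simultaneously exploits the smallness of $f$ (to apply Hartogs) and the openness of the ample cone of $X'$ (so that $A' - \eps f_*A$ can be arranged ample even though $f_*A$ itself is generally only movable, not ample, on $X'$).
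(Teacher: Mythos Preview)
Your argument is correct, but it takes a different route from the paper. The paper's proof is a three-line application of \cite[Proposition~1.5]{bbp}: for a birational morphism $p:W\to X$ one has $\mathbf B_+(p^*D)=p^{-1}(\mathbf B_+(D))\cup\Exc(p)$, so passing to a common resolution $W$ of $f$ and using that $\Exc(p)=\Exc(q)$ (smallness) and $p^*D\equiv q^*D'$ immediately gives that the strict transform $E_W$ of $E$ lies in $\mathbf B_+(q^*D')$ and is not $q$-exceptional, hence $E'\subseteq\mathbf B_+(D')$.

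Your approach instead unwinds the definition of $\mathbf B_+$ directly: you transport an effective $F'\sim_\Q D'-A'$ missing $E'$ back to $X$ by strict transform, and then repair the failure of $A_X=f^{-1}_*A'$ to be ample via the Hartogs-type section comparison $H^0(X,m(A_X-\eps A))\cong H^0(X',m(A'-\eps f_*A))$ together with the openness of the ample cone on $X'$. This is more elementary and self-contained (it avoids citing the behaviour of $\mathbf B_+$ under pullback), at the cost of being longer. The paper's proof is shorter but imports a nontrivial statement from \cite{bbp}; yours essentially reproves the codimension-$1$ part of that statement in the special case at hand.
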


\begin{proof}
Let $W$ be a common resolution of $X$ and $X'$ with $p:W\to X$ and $q:W\to X'$.
By Proposition 1.5 of \cite{bbp}, we have
$$
\begin{array}{rl}
\mathbf B_+(p^*(D))&=p^{-1}(\mathbf B_+(D))\cup\Exc(p),\\
\mathbf B_+(q^*(D'))&=q^{-1}(\mathbf B_+(D'))\cup\Exc(q),
\end{array}
$$
and $\mathbf B_+(p^*(D))=\mathbf B_+(q^*(D'))$.
If $E\subseteq\mathbf B_+(D)$ is a divisorial component, then $E_W:=p^{-1}_*E$
is a divisorial component of $\mathbf B_+(p^*(D))$.
The divisor $E_W$ is not $q$-exceptional
because $X$ is isomorphic to $X'$ in codimension $1$.
Thus $E'=q_*(E_W)$ is also a divisorial component of $\mathbf B_+(D')$. \QED
\end{proof}

Nakamaye gave another characterization of the non-ample locus $\mathbf B_+(D)$
when $D$ is nef.
We define the \emph{null locus} $\Null(D)$ of a nef and big divisor $D$ as
$\Null(D):=\bigcup_V\{V\subseteq X\;|\;D^k\cdot V=0 \text{ where } \dim V=k>0\}$.
\begin{theorem}[Nakamaye's theorem]\label{thrm-nakamaye}
\emph{\cite[Theorem 10.3.5]{laza2}, \cite{nakamaye}}
Let $D$ be a nef and big divisor on $X$. Then
$$
\mathbf B_+(D)=\Null(D).
$$
\end{theorem}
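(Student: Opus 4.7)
The plan is to prove the two inclusions $\Null(D)\subseteq\mathbf{B}_+(D)$ and $\mathbf{B}_+(D)\subseteq\Null(D)$ separately. The first is elementary, while the second is the deep content of Nakamaye's theorem and requires an inductive vanishing argument.

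For $\Null(D)\subseteq\mathbf{B}_+(D)$ I would argue by contrapositive. Let $V\subseteq X$ be irreducible of dimension $k\geq 1$ with $V\not\subseteq\mathbf{B}_+(D)$. By the very definition of $\mathbf{B}_+(D)$ there exists an ample $\Q$-divisor $A$ with $V\not\subseteq\mathbf{B}(D-A)$, hence an effective $\Q$-divisor $E\sim_\Q D-A$ with $V\not\subseteq\Supp E$. Restricting to $V$, the class $D|_V\sim_\Q A|_V+E|_V$ is simultaneously nef (as $D$ is nef) and big (ample plus effective), so the standard fact that a nef and big $\R$-divisor $L$ on a $k$-dimensional projective variety satisfies $L^k>0$ yields $D^k\cdot V=(D|_V)^k>0$, i.e., $V\not\subseteq\Null(D)$.

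For the reverse inclusion, the contrapositive reads: if $V$ is irreducible with $D^{\dim V}\cdot V>0$, then $V\not\subseteq\mathbf{B}_+(D)$; equivalently, for some ample $A$ and $m\gg 0$, one must produce an effective divisor $N\sim m(D-A)$ with $V\not\subseteq\Supp N$. I would induct on $\dim X$: the case $\dim X=1$ is immediate, as a nef and big divisor on a curve is automatically ample. For the inductive step, choose a log resolution $\pi:\widetilde X\to X$ and decompose $\pi^*D=P+F$ with $P$ nef and $F\geq 0$ of simple normal crossings support, letting $\widetilde V$ denote the strict transform of $V$. The construction then proceeds in two stages. First, the inductive hypothesis applied to the nef and big restriction of $P$ to (a suitable birational modification of) $\widetilde V$ furnishes sections of a large multiple of $\pi^*(D-\eps A)|_{\widetilde V}$ not vanishing identically on $\widetilde V$. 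Second, Kawamata--Viehweg vanishing on $\widetilde X$, applied to an asymptotic multiplier ideal cosupported on the divisorial components of $\mathbf{B}_+(\pi^*D)$ other than $\widetilde V$, lifts these sections to global sections of a multiple of $\pi^*(D-\eps A)$ on $\widetilde X$. Pushing down to $X$ yields the required effective divisor $N$.

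The main obstacle is the coordination of these two stages. One must simultaneously force vanishing along every divisorial component of $\mathbf{B}_+(\pi^*D)$ while preserving non-vanishing along $\widetilde V$, and the two requirements are in tension wherever $\widetilde V$ meets such a component. The positivity $D^{\dim V}\cdot V>0$ is the sole source of slack, providing just enough room to subtract a small ample and still retain sections after restriction to $\widetilde V$; the induction on $\dim X$ is what converts this numerical slack into a genuine global effective divisor on $X$ avoiding $V$.
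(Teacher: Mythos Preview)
The paper does not prove this statement at all: Theorem~\ref{thrm-nakamaye} is quoted as a black box, with a citation to \cite[Theorem~10.3.5]{laza2} and \cite{nakamaye}, and is used only as input to Lemma~\ref{lem-lem for main thrm}. So there is no ``paper's own proof'' to compare your proposal against.

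That said, a few comments on your sketch. Your argument for $\Null(D)\subseteq\mathbf{B}_+(D)$ is correct and is exactly the standard one. For the reverse inclusion, the overall shape (produce sections on $V$, lift them via vanishing) is morally right, but the sketch has a genuine slip: you write $\pi^*D=P+F$ with $P$ nef and $F\geq 0$ effective, yet $D$ is already nef by hypothesis, so $\pi^*D$ is nef and no nontrivial such decomposition arises. What actually enters in Nakamaye's proof is not a nef/effective decomposition of $\pi^*D$ but rather the asymptotic multiplier ideals $\mathcal{J}(\|mD-H\|)$ for a fixed very ample $H$: one shows that if $V$ is an irreducible component of $\mathbf{B}_+(D)$ with $D^{\dim V}\cdot V>0$, then the growth of $h^0$ of suitable restrictions, combined with Nadel vanishing for these multiplier ideals, forces a contradiction. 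The induction is not on $\dim X$ in the way you describe; rather, the key quantitative input is a comparison of restricted volumes (or, in Nakamaye's original formulation, a counting argument for sections vanishing to high order along $V$). Your sketch gestures at the right ingredients (multiplier ideals, vanishing, lifting) but the logical scaffolding you propose would need to be substantially reworked to match either \cite{nakamaye} or the treatment in \cite{laza2}.
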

This implies $D^{\dim V}\cdot V=0$ for any irreducible component $V$ of $\mathbf B_+(D)$.
We will also need the following result due to Khovanskii and Teissier.
\begin{theorem}[Khovanskii-Teissier inequality]\label{thrm-hodge ineq}\emph{\cite[Theorem 1.6.1]{laza1}}
Let $X$ be a variety of dimension $d$ and $D_i$ be nef divisors. Then
$$
D_1\cdot D_2\cdots D_d\geq (D_1^d)^{\frac{1}{d}}\cdot (D_2^d)^{\frac{1}{d}}\cdots
(D_d^d)^{\frac{1}{d}}.
$$
\end{theorem}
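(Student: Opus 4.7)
The plan is to deduce the Khovanskii-Teissier inequality from its two-divisor special case, the log-concavity inequality
$$(D_1 \cdot D_2 \cdot E_3 \cdots E_d)^2 \geq (D_1^2 \cdot E_3 \cdots E_d)(D_2^2 \cdot E_3 \cdots E_d)$$
for arbitrary nef divisors $D_1, D_2, E_3, \ldots, E_d$, which is itself proved by cutting down to a surface and invoking the Hodge index theorem.

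For the two-divisor inequality, first I would reduce to the case where every class is ample: fix an ample $H$ and perturb by $\epsilon H$, prove the inequality for the resulting ample classes, then let $\epsilon \to 0^+$ using continuity of the intersection pairing. Once all classes are ample, I would replace $E_3, \ldots, E_d$ by general very ample representatives in suitable linear systems and take their successive intersection, which by repeated application of Bertini's theorem produces a smooth irreducible surface $S \subseteq X$ (passing first to a resolution $\pi \colon \widetilde X \to X$ if $X$ is singular, and using that intersection numbers of pullbacks of nef classes agree with those computed on $X$). The inequality on $X$ then restricts to the Hodge-type inequality $(D_1|_S \cdot D_2|_S)^2 \geq (D_1|_S)^2 \cdot (D_2|_S)^2$ on $S$. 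Since $D_1|_S$ is ample with positive self-intersection, the Hodge index theorem guarantees that the orthogonal complement of $D_1|_S$ in $N^1(S)_{\R}$ is negative definite; decomposing $D_2|_S = \lambda D_1|_S + N$ with $N \cdot D_1|_S = 0$ then yields the inequality immediately.

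Given the two-divisor log-concavity, the full $d$-divisor Khovanskii-Teissier inequality follows by an iterated balancing argument: starting from the mixed intersection $D_1 \cdots D_d$, one repeatedly applies the log-concavity to pairs of distinct divisors in order to migrate exponents toward the pure self-intersections $D_j^d$, at each step only decreasing (or preserving) the mixed intersection number. Combining the resulting pairwise geometric-mean bounds assembles to $D_1 \cdots D_d \geq \prod_j (D_j^d)^{1/d}$. The main obstacle is coordinating the order of operations, since the Bertini-cut reduction requires ample classes while the statement is about nef classes: one must prove the ample case first, handle the resolution of singularities for a possibly non-smooth $X$, and only then pass to the nef limit using continuity of intersection pairings and closedness of the nef cone. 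Once this bookkeeping is settled, each individual step --- Bertini cut, Hodge index, limit --- is short.
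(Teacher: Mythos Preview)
The paper does not supply its own proof of this theorem: it is quoted directly from Lazarsfeld's \emph{Positivity in Algebraic Geometry I} (Theorem 1.6.1) and used as a black box in the proof of Lemma~\ref{lem-lem for main thrm}. So there is no ``paper's own proof'' to compare against.

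That said, your sketch is essentially the argument Lazarsfeld gives. The two-divisor log-concavity inequality is indeed obtained by perturbing to ample classes, passing to a resolution, cutting down by general very ample members to a smooth surface, and invoking the Hodge index theorem; and the general inequality is then deduced formally from the two-divisor case. The one place where your description is a bit loose is the ``iterated balancing argument'' for passing from the pairwise inequality to the full $d$-fold statement. The cleanest way to organize this (and what Lazarsfeld does) is to first deduce from log-concavity that for any two nef classes $\alpha,\beta$ one has
\[
(\alpha^{i}\cdot\beta^{d-i}) \;\geq\; (\alpha^{d})^{i/d}\,(\beta^{d})^{(d-i)/d}
\quad\text{for all }0\le i\le d,
\]
and then argue by induction on the number of distinct classes appearing in the mixed product. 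Your phrase ``migrate exponents toward the pure self-intersections'' gestures at this, but as written it is not a proof; you should make the induction explicit. Once that bookkeeping is in place, your outline is correct.
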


\bs

Taking into consideration of Payne's idea \cite{payne}, we define the following curves.
\begin{definition}\label{def-mov mov1 curve}
Let $X$ be a $\Q$-factorial normal variety of dimension $d$.
\begin{itemize}
\item A curve $C$ on $X$ is called a \emph{movable curve} if it
is a member of a family of curves covering $X$.
\item A curve $C$ on $X$ is called a \emph{$\mov1$(movable in codimension $1$)-curve} if
it is a member of a family of curves
covering a subvariety of codimension $1$.
\item A $\mov1$-curve $C$ on some $\Q$-factorial $X'$ which is isomorphic to $X$
in codimension $1$ is called a \emph{$\bmov1$(birationally movable in codimension $1$)-curve of} $X$.
\end{itemize}
\end{definition}
Note that as explained in Introduction, a $\bmov1$-curve $C$ of $X$
defines a class $\alpha=[C]\in\Num1(X)$
even though the curve $C$ may not be defined on $X$.
Thus we may treat a $\bmov1$-curve $C$ as a class in $\Num1(X)$.
We let $\NM(X)$, $\NM^1(X)$ be the cones in $\Num1(X)$ that are
spanned by the classes of movable curves and $\mov1$-curves on $X$, respectively.
We define $\NM^1(X,X')$ as the image in $\Num1(X)$ of the cone $\NM(X')$
under the isomorphism $\Num1(X')\cong\Num1(X)$.
Lastly, we define $\bNM^1(X)$ as the cone in $\Num1(X)$ spanned by $\bmov1$-curves of $X$.
It is easy to see that
$$
\ol{\bNM}^1(X)=\ol{\sum_{X\dashrightarrow X'}\ol{\NM}^1(X,X')}\;,
$$
where the summation is taken over all $\Q$-factorial $X'$ that are isomorphic to $X$
in codimension $1$.
By definition, a movable curve is $\mov1$ and a $\mov1$-curve is a $\bmov1$-curve of $X$.
Thus
$$
\ol\NM(X)\subseteq\ol{\NM}^1(X)\subseteq\ol{\bNM}^1(X).
$$
It is important to note that the inclusion on the right is strict in general
by Payne's counterexample \cite[Example 1]{payne}.

\ms

\begin{theorem}\label{thrm-NM Eff dual} The following hold:
\begin{enumerate}
\item[\emph{(1)}] $\Nef(X)=\ol\NE(X)^{\vee}$.
\item[\emph{(2)}] $\ol\Eff(X)=\ol\NM(X)^{\vee}$.
\end{enumerate}
\end{theorem}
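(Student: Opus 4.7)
Part (1) is Kleiman's classical duality and is essentially a matter of unwinding definitions. The plan is: a divisor $D$ is nef precisely when $D \cdot C \geq 0$ for every irreducible curve $C \subseteq X$. Since $\NE(X)$ is by construction the convex cone generated by the classes of irreducible curves, this pairing condition is exactly $D \in \NE(X)^{\vee}$. Dualizing commutes with taking the closure of a convex cone in a finite-dimensional real vector space, so $\NE(X)^{\vee} = \ol{\NE}(X)^{\vee}$, while $\Nef(X)$ is already closed as an intersection of closed half-spaces. There is no real obstacle here.

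Part (2) is the celebrated theorem of Boucksom--Demailly--P\u{a}un--Peternell \cite{bdpp}. My plan is to separate the two inclusions. For the easy direction $\ol{\Eff}(X) \subseteq \ol{\NM}(X)^{\vee}$, I would argue by hand: given an effective divisor $D$ and a movable curve $C$, pick a member of the covering family of $C$ not contained in $\Supp(D)$; the intersection number is then non-negative. Extending bilinearly yields $\Eff(X) \subseteq \NM(X)^{\vee}$, and passing to closures on both sides completes the inclusion.

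For the hard direction $\ol{\NM}(X)^{\vee} \subseteq \ol{\Eff}(X)$, I would invoke \cite{bdpp} directly. The content to prove is that if a divisor class $\alpha$ is not pseudoeffective, then there exists a movable curve class $\gamma$ with $\alpha \cdot \gamma < 0$. This is precisely the deep part of BDPP: they construct such a $\gamma$ via the machinery of the divisorial Zariski (or $\sigma$-) decomposition, the moving intersection product, and an analysis of the volume functional $\vol$ together with its directional derivatives, the key output being a strongly movable class of the form $\langle D^{n-1}\rangle$ for a well-chosen big divisor $D$. The main obstacle is exactly this step; reproducing the argument lies well outside the scope of the preliminary section, so I would rely on the citation to \cite{bdpp} to close the proof.
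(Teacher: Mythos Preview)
Your proposal is correct and matches the paper's approach: the paper's proof is simply two citations, attributing (1) to Kleiman--Seshadri (with a pointer to \cite[Proposition 1.4.28]{laza1}) and (2) to \cite{bdpp}, noting that the smooth case there extends to $\Q$-factorial varieties. Your version is in fact more informative, since you outline the easy inclusion in (2) and sketch the BDPP machinery for the hard one; the only point the paper adds that you omit is the remark on passing from smooth to $\Q$-factorial $X$.
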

\begin{proof}
(1) It is a well known result due to Kleiman-Seshadri. See \cite[Proposition 1.4.28]{laza1}.
(2) It is the main result of \cite{bdpp}  for smooth varieties.
The result also holds for $\Q$-factorial varieties.\QED
\end{proof}

According to Theorem \ref{thrm-mob bnm dual},
the cones $\ol{\Mob}(X)$ and $\ol{\bNM}^1(X)$ are dual to each other
at least in some part of the cones.
In order to prove Theorem \ref{thrm-mob bnm dual}, we prove the following equivalent dual statement:

$$
(*)\;\left. \begin{array}{l}
\text{the cones $\ol\Mob(X)$ and $\ol{\bNM}^1(X)^{\vee}$
coincide inside the convex cone}\\
\text{$P=\Nef(X)+\R_{\geq 0}\cdot [K+B]$.}
\end{array}\right.
$$

We start with an easy observation.
\begin{lemma}\label{lemma-nonneg intersect}
We have the following nonnegative intersection pairing:
$$
(\alpha,\beta)\in  \ol{\Mob}(X)\times \ol{\bNM}^1(X)\longmapsto \alpha\cdot\beta\geq 0.
$$
\end{lemma}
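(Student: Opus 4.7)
The plan is to reduce the nonnegative pairing to an elementary intersection-theoretic check on a small modification and then exploit the codimension $\geq 2$ nature of the $\R$-stable base locus of a mobile divisor.

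By bilinearity and continuity of the intersection pairing, it suffices to verify $\alpha\cdot\beta\geq 0$ for generators of the two cones. So I would take $\alpha=[D]$ with $D$ an $\R$-mobile divisor on $X$ and $\beta=[C]$, where $C$ is a $\mov1$-curve on some $\Q$-factorial small modification $f:X\dashrightarrow X'$. Under the identification $\Nu^1(X)\cong\Nu^1(X')$ given by $f_*$ (and its dual identification of $N_1$-spaces), one has $\alpha\cdot\beta=(f_*D)\cdot[C]$ computed on $X'$, so it suffices to prove the inequality on $X'$. The key preparatory observation is that $f_*D$ is itself $\R$-mobile on $X'$: since $f$ is an isomorphism in codimension one, prime divisors correspond bijectively under $f_*$ with preserved multiplicities, and pushforward sends $|D|_\R$ into $|f_*D|_\R$. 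Hence, for any prime divisor $E'=f_*E$ on $X'$, the $\R$-mobility of $D$ produces $D'\in|D|_\R$ with $\mult_E D'=0$, whence $\mult_{E'}f_*D'=0$; thus no prime divisor on $X'$ is a common component of all elements of $|f_*D|_\R$, so $f_*D$ is $\R$-mobile.

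Now fix any ample divisor $A'$ on $X'$ and set $D_\epsilon:=f_*D+\epsilon A'$ for $\epsilon>0$. By Lemma \ref{lem-same mobile}(1), $D_\epsilon\in\Mob_+(X')$, hence $\mathbf{B}_\R(D_\epsilon)\subseteq\mathbf{B}_+(D_\epsilon)$ contains no codimension $1$ subvariety. Let $V\subseteq X'$ be the codimension $1$ subvariety swept out by the family containing $C$. Since $V\not\subseteq\mathbf{B}_\R(D_\epsilon)$, there exists $D''\in|D_\epsilon|_\R$ with $V\not\subseteq\Supp(D'')$. A general member $C_t$ of the family is numerically equivalent to $C$ and passes through a generic point of $V$, hence meets the nonempty open set $V\setminus\Supp(D'')$; therefore $C_t\not\subseteq\Supp(D'')$, and consequently $D_\epsilon\cdot[C]=D''\cdot C_t\geq 0$. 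Letting $\epsilon\to 0$ gives $(f_*D)\cdot[C]\geq 0$, as desired. The main obstacle is the $\R$-mobility preservation step under the small map $f$, which requires tracking actual effective representatives rather than purely numerical data; once that is dispatched, the rest is a routine perturbation argument using the generic-point observation.
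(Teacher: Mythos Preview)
Your proof is correct and follows essentially the same strategy as the paper's: transfer the computation to a single variety on which both the mobile divisor and the $\mov1$-curve live, then exploit that the $\R$-base locus of a mobile divisor has codimension $\geq 2$, so a general member of the covering family meets it properly. Your version is more careful in verifying that $\R$-mobility is preserved under the small map $f$ (the paper simply relabels $X'$ as $X$ without comment), though the perturbation by $\epsilon A'$ is unnecessary: once you know $f_*D$ is $\R$-mobile, $\mathbf{B}_\R(f_*D)$ already has no divisorial component, and the rest of your argument goes through with $f_*D$ in place of $D_\epsilon$.
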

\begin{proof}
Let $D$ be an $\R$-mobile divisor and $C$ be a $\bmov1$-curve on $X$. Since the numerical
classes in $\Num1(X)$ are preserved under a small birational map, we may assume that $C$ is a
$\mov1$-curve on $X$.
Then since $C$ moves in a family of curves covering a subvariety of codimension $1$,
we may assume that $C$ is disjoint from the base locus of $D$ which is of codimension$\geq 2$.
Thus $C\cdot D\geq 0$.
The classes $\alpha$ and $\beta$ are the limits of the classes of such curve $C$ and divisor $D$.
Therefore $\alpha\cdot \beta\geq 0$ by continuity.\QED
\end{proof}

\begin{proof}[Proof of Theorem \ref{thrm-mob bnm dual}]

\noi (Step 1)
As we stated above, we prove the dual statement $(*)$.
By Lemma \ref{lemma-nonneg intersect}, we have $\ol\Mob(X)\subseteq\ol\bNM^1(X)^{\vee}$.
This in particular implies
$$\ol\Mob(X)\cap P\subseteq
\ol\bNM^1(X)^{\vee}\cap P,$$
where $P= \ol\Nef(X)+\R_{\geq 0}\cdot [K+B]$.
Suppose that the strict inclusion $\subsetneq$ holds.
Note that since $\ol{\bNM}^1(X)\supseteq\ol\NM(X)$,
we have $\ol{\bNM}^1(X)^{\vee}\subseteq\ol\Eff(X)$ by (2) of Theorem \ref{thrm-NM Eff dual}.
Note also that $\ol{\bNM}^1(X)^{\vee}=\bigcap\ol\NM^1(X,X')^{\vee}$, where the
intersection is taken over all $\Q$-factorial $X'$ that are isomorphic to $X$ in codimension $1$.
Therefore, if the inclusion above is strict,
then there exists a big divisor $D\in\partial\ol{\Mob}(X)\cap \Int P$
and $D\in\Int(\ol\NM^1(X,X')^{\vee})$ for any $\Q$-factorial $X'$ which is isomorphic to $X$ in codimension $1$.

\smallskip

\noi (Step 2)
There exists an ample divisor $H$ such that $rD\equiv K+B+H$ for some $r>0$.
By rescaling, we may assume that $D\equiv K+B+H$.
By Lemma \ref{lem-ample klt}, we may assume that $(X,B+H)$ is klt.
By Theorem \ref{thrm-from bchm}, there exists a log terminal model $f:X\dashrightarrow Y$
of $(X,B+H)$ which is an isomorphism in codimension $1$.

\smallskip

\noi(Step 3)
Since $D\in\partial\ol\Mob(X)$ and $D$ is big, there exists a divisorial component $E\subseteq\mathbf B_+(D)$
(Lemma \ref{lem-D in baoundary Mob})
and since the modification $f$ is small, $E_Y:=f_*E$ is also a divisorial component of $\mathbf B_+(D_Y)$
(Proposition \ref{prop-component under small}).
This implies that $D_Y\not\in\Int\ol\Mob(Y)$.
However, from Step 1, we have $D_Y\in\Int\big(\ol\NM^1(Y)^{\vee}\big)$.
Since $D_Y$ is also nef, by Lemma \ref{lem-lem for main thrm} we must have $D_Y\in\Int\ol\Mob(Y)$,
and this is a contradiction. \QED
\end{proof}

The following lemma will finish the above proof.

\begin{lemma}\label{lem-lem for main thrm}
If $X$ is a  projective  $\Q$-factorial variety of dimension $n$, then we have
$$
\Nef(X)\cap\Int\big(\ol\NM^1(X)^{\vee}\big)\subseteq\Int\ol\Mob(X).
$$
\end{lemma}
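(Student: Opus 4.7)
The plan is to argue by contradiction: suppose $D \in \Nef(X) \cap \Int(\ol{\NM}^1(X)^{\vee})$ but $D \notin \Int \ol{\Mob}(X)$. Since $\ol{\NM}(X) \subseteq \ol{\NM}^1(X)$ implies $\ol{\NM}^1(X)^{\vee} \subseteq \ol{\NM}(X)^{\vee}$, taking interiors gives $\Int(\ol{\NM}^1(X)^{\vee}) \subseteq \Int \ol{\NM}(X)^{\vee} = \Int \ol{\Eff}(X)$ by Theorem \ref{thrm-NM Eff dual}(2), so $D$ is big. Because $\Nef(X) \subseteq \ol{\Mob}(X)$ and $D \in \Nef(X)$, the assumption forces $D \in \partial \ol{\Mob}(X)$, and Lemma \ref{lem-D in baoundary Mob} supplies a prime divisorial component $E$ of $\mathbf{B}_+(D)$.

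Since $D$ is nef and big, Nakamaye's theorem (Theorem \ref{thrm-nakamaye}) gives $\mathbf{B}_+(D) = \Null(D)$, and as $E$ is an irreducible component we obtain $D^{n-1} \cdot E = 0$. In particular $(D|_E)^{\dim E} = 0$, so $D|_E$ is nef but not big on $E$. Fix an ample divisor $H$ on $X$ and let $k$ be the largest integer with $(D|_E)^k \cdot (H|_E)^{n-1-k} > 0$; then $0 \leq k \leq n-2$ (the upper bound coming from $(D|_E)^{n-1} = 0$), and the maximality of $k$ combined with the nefness of $D|_E$ and $H|_E$ forces $(D|_E)^{k+1} \cdot (H|_E)^{n-2-k} = 0$.

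The key construction is the class $\gamma := D^k \cdot H^{n-2-k} \cdot E \in \Num1(X)$. To see $\gamma \in \ol{\NM}^1(X)$, approximate $D$ by ample $\Q$-divisors $D_j \to D$; for $m$ sufficiently divisible, a general complete intersection in $E$ of $k$ members of $|m D_j|$ with $n-2-k$ members of $|m H|$ is, by Bertini, an irreducible curve that sweeps out $E$ as the hypersurfaces vary, hence a $\mov1$-curve on $X$ whose class equals a positive multiple of $D_j^k \cdot H^{n-2-k} \cdot E$. Thus $D_j^k \cdot H^{n-2-k} \cdot E \in \ol{\NM}^1(X)$, and passing to the limit $j \to \infty$ yields $\gamma \in \ol{\NM}^1(X)$. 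By the choice of $k$, $H \cdot \gamma = (D|_E)^k \cdot (H|_E)^{n-1-k} > 0$ so $\gamma \neq 0$, while $D \cdot \gamma = (D|_E)^{k+1} \cdot (H|_E)^{n-2-k} = 0$; this contradicts $D \in \Int(\ol{\NM}^1(X)^{\vee})$.

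The main obstacle is the assertion $\gamma \in \ol{\NM}^1(X)$, which reduces to a Bertini-type verification that successive general hyperplane sections of the (possibly non-normal) prime divisor $E$ cut out irreducible curves sweeping out $E$; since ample linear systems on $X$ restrict to basepoint-free systems on the irreducible $E$, this is routine. One can alternatively invoke the Khovanskii-Teissier log-concavity (Theorem \ref{thrm-hodge ineq}) on $E$ to justify the properties of the numerical dimension, but the bare maximality of $k$ together with nefness already suffices for the two intersection-number identities needed above.
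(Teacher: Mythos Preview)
Your argument is correct and shares the paper's overall framework: assume $D\notin\Int\ol\Mob(X)$, note that $D$ is big, produce a divisorial component $E\subseteq\mathbf B_+(D)$, and use Nakamaye's theorem to get $D^{n-1}\cdot E=0$. The divergence is in how the contradiction with $D\in\Int(\ol\NM^1(X)^{\vee})$ is extracted.

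The paper works with the $\mov1$-curve $C_\lambda=(D+\lambda A)^{n-2}\cdot E$ for small $\lambda>0$ and an ample $A$ with $D-A\in\ol\NM^1(X)^{\vee}$; from $(D-A)\cdot C_\lambda\geq 0$ and the Khovanskii--Teissier inequality (Theorem~\ref{thrm-hodge ineq}) on $E$ it deduces $(D+\lambda A)^{n-1}\cdot E\geq A^{n-1}\cdot E>0$, which contradicts $D^{n-1}\cdot E=0$ in the limit $\lambda\to 0$. You instead isolate the numerical dimension $k$ of $D|_E$ and exhibit the single class $\gamma=D^k\cdot H^{n-2-k}\cdot E\in\ol\NM^1(X)\setminus\{0\}$ with $D\cdot\gamma=0$. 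Your route avoids Khovanskii--Teissier entirely, trading it for a Bertini/limit argument to place $\gamma$ in $\ol\NM^1(X)$; the paper's route keeps the curve manifestly $\mov1$ (since $D+\lambda A$ is ample) but needs the log-concavity inequality. Both are short and essentially equivalent in strength; your last paragraph correctly identifies that Khovanskii--Teissier would give an alternative justification of the numerical-dimension step, which is precisely the bridge between the two arguments.
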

\begin{proof}
Let $D\in\Nef(X)\cap\Int(\ol\NM^1(X)^{\vee})$. Note that $D$ is big by \cite{bdpp}
((2) of Theorem \ref{thrm-NM Eff dual}).
If $D$ does not belong to $\Int\ol\Mob(X)$, then there exists a divisorial component $E\subseteq\mathbf B_+(D)$
and by Nakamaye's theorem (Theorem \ref{thrm-nakamaye}), $D^{n-1}\cdot E=0$.
Since $D\in\Int(\ol\NM^1(X)^{\vee})$, there exists some ample $\Q$-divisor $A$ such that
$(D-A)\cdot C\geq 0$ for all $\mov1$-curves $C$ on $X$.
If we apply this to the  $\mov1$-curve $C:=(D+\lambda A)^{n-2}\cdot E$,
then we obtain
$$
(D+\lambda A)^{n-1}\cdot E\geq A\cdot (D+\lambda A)^{n-2}\cdot E.
$$
Hence by the Khovanskii-Teissier inequality (Theorem \ref{thrm-hodge ineq}),
$$
(D+\lambda A)^{n-1}\cdot E\geq (A^{n-1}\cdot E)^{\frac{1}{n-1}}\cdot((D+\lambda A)^{n-1}\cdot E)^{\frac{n-2}{n-1}},
$$
and $(D+\lambda A)^{n-1}\cdot E\geq A^{n-1}\cdot E$.
This shows that by taking the limit $\lambda \to 0$, we get a contradiction
$$
0\leftarrow (D+\lambda A)^{n-1}\cdot E\geq A^{n-1}\cdot E>0.
$$\QED
\end{proof}

We give a partial affirmative answer to the problem posed in \cite{payne}
for the following type of varieties.
\begin{definition}\label{def-FT var}
A $\Q$-factorial variety $X$ is said to be of \emph{Fano type(FT)} if there exists a boundary
$\Q$-divisor $B$ on $X$ such that $(X,B)$ is klt, $K+B\equiv 0$ and the divisors
in the support of $B$ generate $\Nu^1(X)$.
\end{definition}
See \cite[Lemma-Definition 2.8]{prsh} for equivalent definitions.

\begin{corollary}\label{cor-FT mov}
For a $\Q$-factorial  FT variety $X$, the following duality holds:
$$
\ol{\Mob}(X)^{\vee}=\ol{\bNM}^1(X).
$$
Furthermore, the cones $\ol{\Mob}(X)$ and $\ol{\bNM}^1(X)$ are
closed convex and rational polyhedral.
\end{corollary}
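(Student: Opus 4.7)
My plan is to reduce Corollary \ref{cor-FT mov} to Theorem \ref{thrm-mob bnm dual} by choosing a log Fano boundary on $X$. Since $X$ is of Fano type, one of the equivalent characterizations in \cite[Lemma-Definition 2.8]{prsh} provides a boundary $\Q$-divisor $\Delta$ such that $(X,\Delta)$ is klt and $-(K+\Delta)$ is ample. For every nonzero class $[C]\in\ol\NE(X)$ we then have $(K+\Delta)\cdot[C]<0$, so $\ol\NE(X)_{K+\Delta\geq 0}=\{0\}$. Applying Theorem \ref{thrm-mob bnm dual} to the pair $(X,\Delta)$, the ``$\ol\NE$ summands'' on the two sides disappear simultaneously, and we are left with $\ol{\Mob}(X)^{\vee}=\ol{\bNM}^1(X)$, which is the first assertion.

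For the polyhedrality, I would invoke the fact that every $\Q$-factorial Fano type variety is a Mori dream space, a consequence of \cite{bchm} together with the criterion of Hu--Keel. In particular $X$ admits only finitely many $\Q$-factorial small modifications $f_i:X\dashrightarrow X_i$ ($i=1,\ldots,s$), each of which is again of Fano type, and hence each $\Nef(X_i)$ is a rational polyhedral cone by the Cone Theorem. The Mori chamber decomposition
$$
\ol{\Mob}(X)=\bigcup_{i=1}^{s}(f_i)_{*}^{-1}\Nef(X_i),
$$
which one verifies by running the $(K+B+\varepsilon D)$-MMP on a mobile big divisor $D$ and using condition \emph{(4)} of Theorem \ref{thrm-from bchm} to guarantee that no divisor gets contracted along the way, so the MMP terminates at an SQM on which $D$ becomes nef, now exhibits $\ol{\Mob}(X)$ as a convex union of finitely many rational polyhedral cones and therefore as a rational polyhedral cone itself. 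Dualizing via the first paragraph, $\ol{\bNM}^1(X)=\ol{\Mob}(X)^{\vee}$ is rational polyhedral as well.

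The main step, and really the only place where new geometric input is needed, is the reduction in the first paragraph; the polyhedral conclusion is then a bookkeeping consequence of the Mori dream space structure of a $\Q$-factorial Fano type variety. The one potential subtlety is the chamber decomposition of $\ol{\Mob}(X)$ into pullbacks of nef cones of SQMs, but this is standard and relies only on results already invoked in the paper, notably Theorem \ref{thrm-from bchm}.
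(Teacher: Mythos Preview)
Your proof is correct and follows essentially the same route as the paper: both produce a klt boundary $\Delta$ with $-(K+\Delta)$ ample (the paper constructs it as $B-\varepsilon A$ from the FT data, you invoke the equivalent characterization in \cite{prsh} directly), observe that $\ol\NE(X)_{K+\Delta\geq 0}=\{0\}$, and then apply Theorem~\ref{thrm-mob bnm dual}. For the polyhedrality the paper likewise appeals to the finiteness of models from Theorem~\ref{thrm-bchm decomposition} (together with \cite{chsh}), which is exactly the Mori dream space chamber decomposition you spell out, so the only difference is that your presentation is more explicit.
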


\begin{proof}
There exists an effective boundary $\Q$-divisor $B$ such that
$(X,B)$ is klt, $K+B\equiv 0$ and the components of $B$ generate $\Nu^1(X)$.
There exists an effective ample divisor $A$ such that $\Supp A=\Supp B$.
The pair $(X,B-\varepsilon A)$ is klt for sufficiently small $\varepsilon>0$
and $-(K+B-\varepsilon A)$ is ample.
Therefore, the cone $\ol\NE(X)$ is $(K+B-\eps A)$-negative and
the equality follows immediately from
Theorem \ref{thrm-mob bnm dual} and Theorem \ref{thrm-cone thrm NM^1}.
The last statement follows from the rational polyhedral
property and the finiteness of the decomposition in Theorem \ref{thrm-bchm decomposition}.
See also \cite[Corollary 4.5]{chsh}. \QED
\end{proof}

\begin{remark}
In \cite{payne}, it is shown that  for complete $\Q$-factorial toric varieties $X$ and $0\leq k\leq \dim X$,
the duality holds between the closed cone in $\Nu^1(X)$ spanned by divisors that are
ample in codimension $k$ and
the closed cone in $\Num1(X)$ spanned by the curves that are birationally movable in codimension $k$.
(see \cite[Theorem 2]{payne}).
Payne asks if this is also true for general non-toric varieties.
Note that the two extreme cases $k=0$ and $k=\dim X$ are
true by Theorem \ref{thrm-NM Eff dual}.
Corollary \ref{cor-FT mov} gives an affirmative answer to this problem
for $\Q$-factorial FT varieties $X$ for the case $k=\dim X-1$.
It is also easy to see from the proof of Theorem \ref{thrm-mob bnm dual}
that the same duality holds for Mori dream spaces \cite{YS}.
Indeed, the duality holds in the part of the cone $\ol\Mob(X)$
where we can run the MMP.
In \cite{payne}, it is also explained that considering only the $\mov1$-curves in Theorem \ref{thrm-mob bnm dual}
is not enough in general (see \cite[Example 1]{payne}).
\end{remark}

\section{Cone theorems}\label{section-cone theorem}
Inspired by the results in \cite{araujo} and \cite{leh},
we prove the cone theorem (Theorem \ref{thrm-cone thrm NM^1}) and the contraction theorem
(Theorem \ref{thrm-contraction thrm NM^1}) for the cone $\ol\bNM^1(X)$
in this section.
\ms

Let $(X,B)$ be a $\Q$-factorial klt pair.
In the space $\Num1(X)$, we consider the following two convex cones:
(see Figure \ref{figure})
$$
\begin{array}{rl}
V(X,B)(=V)&:=\ol\NE(X)_{K+B\geq 0}+\ol{\bNM}^1(X),\\
V'(X,B)(=V')&:=\ol\NE(X)_{K+B\geq 0}+\ol{\NM}(X).\\
\end{array}
$$

An extremal face $F$ of  $\ol{\bNM}^1(X)$ is called a \emph{$\mov1$-co-extremal face}
for the pair $(X,B)$ if $F$ is a $(K+B)$-negative extremal face of $V$.
A divisor $D$ which is positive on $\ol\NE(X)_{K+B\geq 0}\setminus\{0\}$ and such that
the plane $\{\alpha\in\Num1(X)|\alpha\cdot D=0\}$ supports
the cone $V$ exactly at a $\mov1$-co-extremal face $F$ is called a \emph{$\mov1$-co-bounding divisor} of $F$.
An extremal face $F'$ of  $\ol\NM(X)$ is called a \emph{co-extremal face}
for the pair $(X,B)$ if $F'$ is a $(K+B)$-negative extremal face of $V'$ \cite{baty}.
A divisor $D$ which is positive on $\ol\NE(X)_{K+B\geq 0}\setminus\{0\}$ and
such that the plane $\{\alpha\in\Num1(X)|\alpha\cdot D=0\}$
supports the cone $V'$ exactly at a co-extremal face $F'$ is called a
\emph{co-bounding divisor} of $F'$.

\bs

 \begin{table}[htp]
  \begin{center}
    \begin{tabular}{ l c r }
\begin{overpic}[scale=0.46]{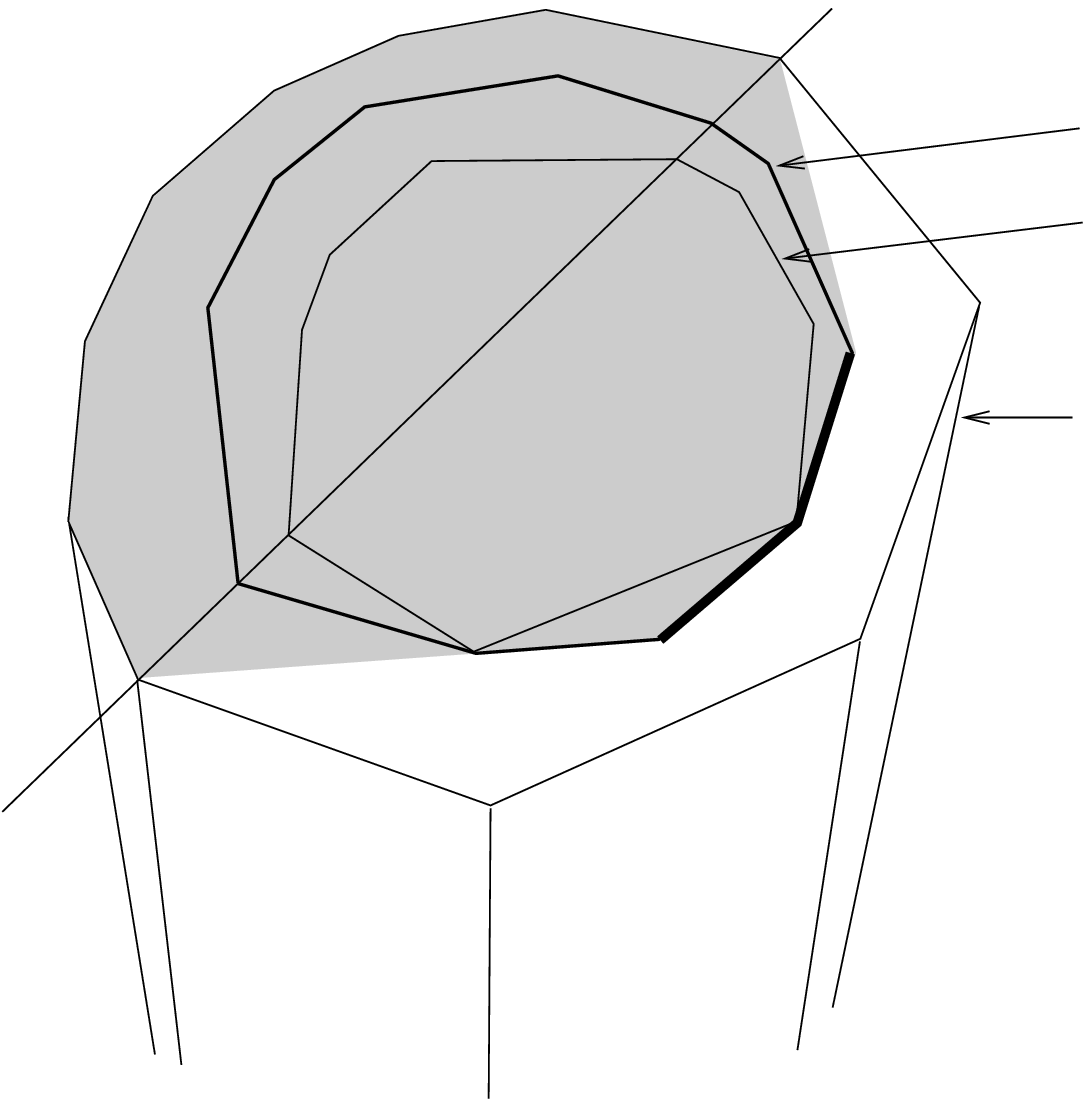}
\put(101,88){\tiny $\ol{\bNM}^1(X)$}
\put(101,61){\tiny $\ol{\NE}(X)$}
\put(75,101){\tiny $K+B=0$}
\put(15,100){\tiny $V_{K+B>0}$}
\put(0,-8){\tiny $V=\ol{\NE}(X)_{K+B\geq0}+\ol{\bNM}^1(X)$}
\end{overpic} & \;\;\;\;
 &\begin{overpic}[scale=0.46]{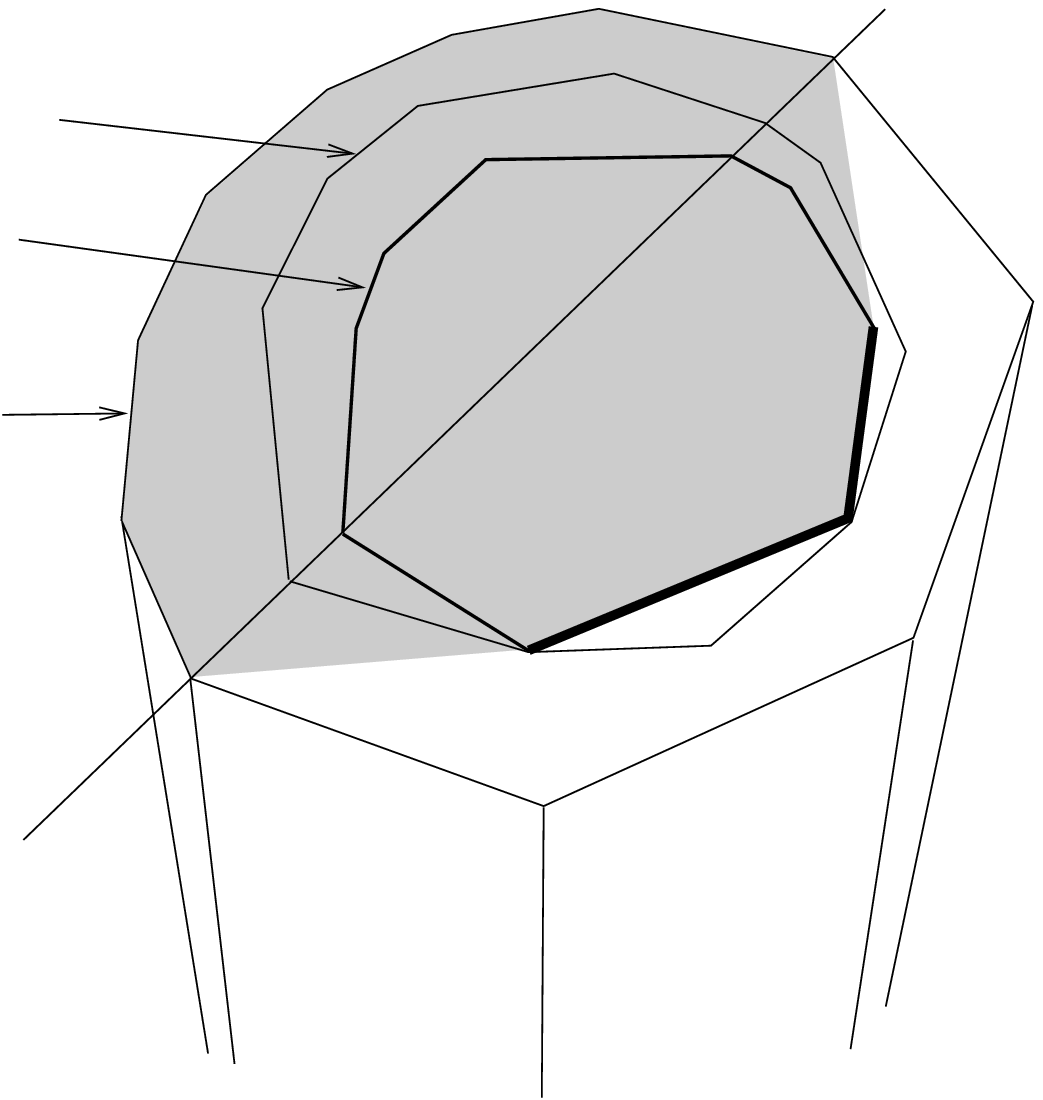}
\put(78,102){\tiny $K+B=0$}
\put(18,100){\tiny $V'_{K+B>0}$}
\put(-19,79){\tiny $\ol{\NM}(X)$}
\put(10,-8){\tiny $V'=\ol{\NE}(X)_{K+B\geq0}+\ol{\NM}(X)$}
\end{overpic} \\
    \end{tabular}
  \end{center}
  \captionsetup{type=figure}
  \setlength{\abovecaptionskip}{20pt}
  \caption{}\label{figure}
\end{table}

As illustrated in Figure \ref{figure}, an extremal face of $\ol{\bNM}^1(X)$ (resp. $\ol{\NM}(X)$)
in $\ol\NE_{K+B<0}(X)$ is not necessarily a
$\mov1$-co-extremal face (resp. a co-extremal face).
Note also that a co-extremal face of $\ol\NM(X)$ can coincide with a
$\mov1$-co-extremal face of $\ol{\bNM}^1(X)$.
It is also easy to see that the cone $\ol{\bNM}^1(X)$ has a $\mov1$-co-extremal ray for $(X,B)$
if and only if $K+B\not\in\ol\Mob(X)$ by Theorem \ref{thrm-mob bnm dual}
and the cone $\ol\NM(X)$ has a co-extremal  ray for $(X,B)$ if and only if $K+B\not\in\ol\Eff(X)$
by (2) of Theorem \ref{thrm-NM Eff dual}.

\bs

We have the following cone theorem for $\ol{\NM}(X)$ and
the contraction theorem for co-extremal rays (\cite{araujo},\cite{leh}).

\begin{theorem}[Cone Theorem for $\ol\NM(X)$]\label{thrm-cone thrm NM}
\emph{\cite[Theorem 1.1]{araujo}, \cite[Theorem 1.3]{leh}}
Let $(X,B)$ be a dlt pair. There are countably many $(K+B)$-negative movable curves
$\{C_i\}_{i\in I}$ such that
$$
\ol\NE(X)_{K+B\geq 0}+\ol\NM(X)=\ol\NE(X)_{K+B\geq 0}+\ol{\sum_{i\in I}\R_{\geq 0}\cdot[C_i]}.
$$
The rays $\R_{\geq 0}\cdot[C_i]$ can accumulate only along the hyperplanes supporting both
$\ol{\NM}(X)$ and $\ol\NE(X)_{K+B\geq 0}$.
\end{theorem}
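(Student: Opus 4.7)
My approach follows Araujo \cite{araujo} and Lehmann \cite{leh}, combining the BDPP duality $\ol{\Eff}(X)=\ol{\NM}(X)^{\vee}$ (Theorem \ref{thrm-NM Eff dual}) with the finiteness of log terminal models supplied by BCHM. The starting observation is that a $(K+B)$-negative co-extremal ray $R$ of $V'=\ol{\NE}(X)_{K+B\geq 0}+\ol{\NM}(X)$ is supported by a co-bounding divisor $D$ which is nonnegative on $\ol{\NM}(X)$, hence pseudoeffective by BDPP, and strictly positive on $\ol{\NE}(X)_{K+B\geq 0}\setminus\{0\}$. By perturbation I may take $D$ rational and, after rescaling, of the form $D\equiv K+B+A$ for an ample $\Q$-divisor $A$; by Lemma \ref{lem-ample klt} I may further assume $(X,B+A)$ is klt.

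I would then run the $(K+B+A)$-MMP: Theorem \ref{thrm-from bchm} produces a birational contraction $\Phi\colon X\dashrightarrow X'$ to a log terminal model on which $D':=\Phi_*D\equiv K_{X'}+B'+A'$ is nef. Since $R$ is $(K+B)$-negative while $D'$ is nef and vanishes on $R$, the classes in $R$ must come from curves contracted after further decreasing the ample coefficient of $A$ along an MMP with scaling; this procedure terminates at a Mori fiber contraction $\psi\colon X''\to Z$ whose general fibers are movable curves on $X''$ with classes in $\Num1(X)$ spanning $R$. These fiber classes provide the candidate generators $C_i$. Countability comes from enumerating rational co-bounding divisors inside rational polytopes $\mathcal E_H$ and applying Theorem \ref{thrm-bchm decomposition}, which yields only finitely many resulting models per polytope and hence only finitely many Mori fiber contractions. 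The accumulation clause then follows by continuity: any limit direction of co-extremal rays is annihilated by some limit co-bounding divisor, whose zero locus must support both $\ol{\NM}(X)$ and $\ol{\NE}(X)_{K+B\geq 0}$.

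For the refined finiteness clause, for fixed $\eps>0$ and ample $H$ the co-bounding divisors of $(K+B+\eps H)$-negative co-extremal rays lie in a compact subset of $\mathcal E_H$, so the rational polyhedral decomposition of Theorem \ref{thrm-bchm decomposition} again yields only finitely many resulting models, hence finitely many fiber classes $\{C_j\}_{j\in J}$. I expect the main obstacle to be the MMP step inside the second paragraph: showing that every $(K+B)$-negative co-extremal ray of $V'$ genuinely arises as a fiber class of a Mori fiber contraction on an MMP-output, and that the scaling of $A$ can be calibrated so that the contracted ray is exactly $R$ rather than a larger face or a neighboring ray. Handling the degenerate case where several co-extremal rays share a supporting co-bounding divisor, for instance when the face spanned by $R$ has dimension greater than one, requires the MMP with scaling to produce an entire chamber of Mori fiber contractions whose fibers collectively span the face.
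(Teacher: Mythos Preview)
The paper does not prove this statement: Theorem \ref{thrm-cone thrm NM} is quoted from \cite{araujo} and \cite{leh} as a background result and is used later without argument, so there is no ``paper's own proof'' to compare against. Your sketch is a reasonable outline of the Araujo--Lehmann strategy (BDPP duality plus BCHM finiteness to produce Mori fiber structures whose general fibers span the co-extremal rays), and the ingredients you invoke from the present paper (Theorem \ref{thrm-NM Eff dual}, Lemma \ref{lem-ample klt}, Theorem \ref{thrm-from bchm}, Theorem \ref{thrm-bchm decomposition}) are the correct ones; but since the paper treats this as a black box, no comparison is possible.
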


\begin{remark}\label{remk-cone them NM}
As explained in \cite[Example 1.4]{araujo} and \cite[Example 4.9]{leh},
the genuine form of the cone theorem does not hold for $\ol{\NM}(X)$ in general, that is,
we cannot replace $\ol\NE(X)$ in Theorem \ref{thrm-cone thrm NM} by
$\ol{\NM}(X)$.
\end{remark}

\begin{theorem}[Contraction theorem for co-extremal faces]\label{thrm-contraction thrm NM}
\emph{\cite[Theorem 1.4]{leh}}
Let $(X,B)$ be a dlt pair. Suppose that $F'$ is a co-extremal face of $\ol\NM(X)$ for $(X,B)$
and $D$ be a co-bounding divisor of $F'$. Then there exists a birational morphism
$\varphi:W\rightarrow X$ and a contraction $h:W\rightarrow Z$ such that
\begin{enumerate}
\item[(1)] Every movable curve $C$ on $W$ with $[\varphi_*C]\in F'$ is contracted by $h$.
\item[(2)] For a general pair of points in a general fiber of $h$, there is a movable curve $C$ through the two points with $[\varphi_*(C)]\in F'$.
\end{enumerate}
These properties determine the pair $(W,h)$, up to a birational equivalence. In fact, the map we construct satisfies a stronger property:
\begin{enumerate}
\item[(3)] There is an open set $U\subseteq W$ such that the complement of $U$ has codimension $2$ in a general fiber of $h$ and a complete curve $C$ in $U$ is contracted by $h$ if and only if $[\varphi_*C]\in F'$.
\end{enumerate}
\end{theorem}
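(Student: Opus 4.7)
The plan is to construct $(W,h)$ as the outcome of a suitably chosen minimal model program on $(X,B)$ whose terminal contraction realizes the face $F'$. The co-bounding hypothesis on $D$ says that $D$ is strictly positive on $\ol\NE(X)_{K+B\geq 0}\setminus\{0\}$ and that within $V'$ the zero set of $D$ is exactly $F'$; in particular $D\in\ol\Eff(X)$ by Theorem \ref{thrm-NM Eff dual}, and $D$ behaves like an ``almost ample'' class that happens to be trivial precisely on the movable curves we want to contract.

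First I would reduce $D$, up to numerical equivalence, rescaling, and addition of classes orthogonal to $V'$, to a log canonical class $K+B+A$ with $A$ an ample $\Q$-divisor and $(X,B+A)$ klt (Lemma \ref{lem-ample klt}), while keeping $F'$ a $(K+B+A)$-negative co-extremal face and $D$ the associated co-bounding divisor. This exploits the strict positivity of $D$ on the closed cone $\ol\NE(X)_{K+B\geq 0}$ to absorb a small ample perturbation, and the polyhedral decomposition of $\mathcal{E}_H$ in Theorem \ref{thrm-bchm decomposition} to ensure the perturbed data land in the interior of a single BCHM chamber. I would then run the $(K+B+A)$-MMP with scaling on $X$; by \cite{bchm} it terminates in a model $X^\sharp$ with a contraction $\psi:X^\sharp\to Z$ (divisorial, trivial, or a Mori fiber space), and every extremal ray contracted along the program is $D$-trivial, hence lies in $F'$.

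Next I would assemble $(W,h)$: let $W$ be the normalization of the graph of $X\dashrightarrow X^\sharp$, with projective birational morphisms $\varphi:W\to X$ and $p:W\to X^\sharp$, and set $h:=\psi\circ p$ (composed with Stein factorization if necessary). Condition (1) then follows formally from the projection formula: any movable curve $C$ on $W$ with $[\varphi_*C]\in F'$ satisfies $\varphi^*D\cdot C=0$, and $\varphi^*D$ is numerically a positive multiple of $h^*H_Z$ away from $\varphi$-exceptional loci that a general movable curve avoids, so $h$ contracts $C$. For condition (2) I would invoke that the general fiber $F$ of a Mori fibration is rationally connected in codimension $1$, in the sense and by the argument of \cite{araujo}, to produce through two general points of $F$ a movable curve whose $\varphi$-image has class in $F'$. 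For condition (3), take $U$ to be the complement of the codimension $\geq 2$ flipping loci of $W\dashrightarrow X^\sharp$ together with the lower-dimensional exceptional strata of $\varphi$; on $U$ the intersection-theoretic triviality against $\varphi^*D$ sharpens to being contained in a fiber of $h$.

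The hard part is the reduction step: $D$ is only positive on a proper subcone of $\ol\NE(X)$ and need not be nef on $X$, so massaging it into the form $K+B+A$ while keeping $F'$ precisely the contracted face of the eventual MMP is delicate. Concretely, one must choose the ample perturbation $A$ small enough that no spurious $(K+B+A)$-negative extremal ray appears outside $F'$, yet coherent enough with $D$ that the BCHM chamber structure isolates the correct log terminal model $X^\sharp$. A secondary obstacle is condition (2), which is sharper than the purely numerical statement $F'\subseteq \ker(h_*)$ and requires the geometric, rather than combinatorial, input of rational-connectedness-in-codimension-$1$ for general fibers of Mori fibrations, imported from \cite{araujo}.
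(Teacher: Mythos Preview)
This theorem is not proved in the paper: it is quoted verbatim from Lehmann \cite[Theorem 1.4]{leh} and used as a black box (for instance, in the proof of Theorem \ref{thrm-contraction thrm NM^1} when $D\in\partial\ol\Eff(X)$). There is therefore no ``paper's own proof'' to compare your proposal against.

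That said, your sketch is a plausible outline of Lehmann's actual argument, and it parallels closely what the present paper does in the $\bmov1$ setting: the reduction step you describe is exactly the analogue of Lemma \ref{lem-quasibounding div}, and the construction of $(W,h)$ via an MMP and graph closure mirrors the proof of Theorem \ref{thrm-contraction thrm NM^1}. One point to tighten: since $D$ is a co-bounding divisor vanishing on a face $F'\subseteq\ol\NM(X)$, duality (Theorem \ref{thrm-NM Eff dual}) forces $D\in\partial\ol\Eff(X)$, so $D$ is pseudoeffective but \emph{not} big; hence the $(K+B+A)$-MMP you run must terminate in a Mori fiber space, not a divisorial or trivial contraction. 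Your list of three possible outcomes is too generous here, and this matters because condition (2) relies on the fibration structure. A second soft spot is your verification of (1): saying $\varphi^*D$ is ``numerically a positive multiple of $h^*H_Z$ away from $\varphi$-exceptional loci'' is not quite right, since $\varphi^*D$ and $p^*D_{X^\sharp}$ differ by an effective $\varphi$-exceptional divisor on $W$, and one has to argue that a movable curve avoids that support, not merely the exceptional locus as a set.
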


\begin{remark}\label{remk-nonexposed ray}
If $K+B\in\partial\ol\Eff(X)$, then $\ol{\NM}(X)\subseteq\ol\NE(X)_{K+B\geq 0}$
by Theorem \ref{thrm-NM Eff dual}. Thus there are no co-extremal faces for the pair $(X,B)$.
However, there exists an extremal face $F'$ of $\ol{\NM}(X)$ in $\ol{\NE}(X)_{K+B=0}$.
If $B$ is big, then $K+B-\eps B$ is not pseudoeffective
for any $\eps>0$ because $F'$ is $(K+(1-\eps)B)$-negative.
Thus there exists a co-extremal ray of $\ol\NM(X)$ for the pair $(X,(1-\eps)B)$ and
since $(X,(1-\eps)B)$ is klt for small $\eps>0$, the above theorems can be applied
to this pair. In particular, some extremal rays of $F'$ are contractible on some birational model of $X$.
\end{remark}

Theorem \ref{thrm-cone thrm NM^1} and Theorem \ref{thrm-contraction thrm NM^1}
can be considered as another analogues of the original Cone Theorem
and Contraction Theorem for $\ol\NE(X)$ (e.g. \cite[Theorem 3.7]{komo}).
We closely follow the paper \cite{leh}
in the proofs below. We start with a lemma.

\begin{lemma}\label{lem-quasibounding div}
Let $(X,B)$ be a $\Q$-factorial klt pair.
Let $K+B\not\in\ol{\Mob}(X)$ and
$F$ be a $\mov1$-co-extremal face of $\ol{\bNM}^1(X)$ for $(X,B)$.
If $D$ is a $\mov1$-co-bounding divisor of $F$,
then there exists an ample divisor $H$ such that $K+B+H$ is ample and $\alpha D\equiv K+B+cH$
for some $\alpha>0$ and $0<c<1$.
\end{lemma}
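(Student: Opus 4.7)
The plan is to reduce the lemma to producing a numerical decomposition $D\equiv A+\mu(K+B)$ with $A$ ample and $\mu>0$; once this is in hand, I set $\alpha:=1/\mu$, giving $\alpha D\equiv K+B+A/\mu$, and then choose $c\in(0,1)$ small enough so that $H:=A/(c\mu)$ satisfies $K+B+H$ ample. This last condition holds for $c$ small since $K+B+tA$ lies in the open ample cone for every $t$ above some threshold depending on $K+B$ and $A$. By construction $H$ is ample as a positive multiple of $A$, and $\alpha D\equiv K+B+cH$, with $\alpha>0$ and $0<c<1$ as required.

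To extract the decomposition I use convex-cone duality. By the definition of a $\mov1$-co-bounding divisor, $D$ is strictly positive on $\ol\NE(X)_{K+B\geq 0}\setminus\{0\}$, which places $D$ in the interior of the dual cone $\bigl(\ol\NE(X)_{K+B\geq 0}\bigr)^{\vee}$. Writing $\ol\NE(X)_{K+B\geq 0}$ as the intersection of $\ol\NE(X)$ with the closed half-space $\{\gamma\in\Num1(X):(K+B)\cdot\gamma\geq 0\}$, applying the standard rule $(C_1\cap C_2)^{\vee}=\overline{C_1^{\vee}+C_2^{\vee}}$ together with $\ol\NE(X)^{\vee}=\Nef(X)$, and noting that the dual of the half-space is $\R_{\geq 0}\cdot(K+B)$, I identify this dual with $\overline{\Nef(X)+\R_{\geq 0}\cdot(K+B)}$. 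Since $\Nef(X)$ is closed convex and $\R_{\geq 0}\cdot(K+B)$ is polyhedral, their Minkowski sum is already closed, hence
$$D\in\Int\bigl(\Nef(X)+\R_{\geq 0}\cdot(K+B)\bigr).$$

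Finally, I pick any ample divisor $A_0$. Because $D$ is an interior point of the sum cone, for sufficiently small $\eta,\eps>0$ the class $D-\eta A_0-\eps(K+B)$ still lies in $\Nef(X)+\R_{\geq 0}\cdot(K+B)$ and thus decomposes as $N+\mu_0(K+B)$ with $N$ nef and $\mu_0\geq 0$. Rearranging gives
$$D\equiv (N+\eta A_0)+(\mu_0+\eps)(K+B)=A+\mu(K+B),$$
with $A:=N+\eta A_0$ ample (nef plus ample) and $\mu:=\mu_0+\eps>0$, completing the plan. The main obstacle is the duality step: correctly identifying the dual of $\ol\NE(X)_{K+B\geq 0}$ via the intersection-dual formula, verifying that no closure is needed because the Minkowski sum of the closed convex cone $\Nef(X)$ with the polyhedral ray $\R_{\geq 0}\cdot(K+B)$ is already closed, and executing the simultaneous interior-point perturbation by $\eta A_0$ and $\eps(K+B)$ that forces the nef summand to be ample and the coefficient of $K+B$ to be strictly positive at the same time.
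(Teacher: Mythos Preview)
Your argument is correct in outline, but the claim that $\Nef(X)+\R_{\geq0}\cdot(K+B)$ is already closed ``because one summand is polyhedral'' is not a valid general principle: the Minkowski sum of a closed convex cone and a ray can fail to be closed when the negative of the ray lies on the boundary of the cone (a round cone plus a tangent ray is the standard example), and nothing in the hypotheses excludes $-(K+B)\in\partial\Nef(X)$. The fix is painless and you do not actually need closedness. Since the sum contains the open cone $\Amp(X)$ it has non-empty interior, and for any convex set $S$ with non-empty interior one has $\Int(\ol S)=\Int(S)$; hence
\[
D\in\Int\bigl((\ol\NE(X)_{K+B\geq0})^{\vee}\bigr)=\Int\bigl(\overline{\Nef(X)+\R_{\geq0}\cdot(K+B)}\bigr)=\Int\bigl(\Nef(X)+\R_{\geq0}\cdot(K+B)\bigr),
\]
and your perturbation by $\eta A_0+\eps(K+B)$ goes through unchanged.

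The paper takes a different and more direct route. Rather than computing the dual cone, it shows that the two-dimensional cone $G=\R_{\geq0}\cdot D+\R_{\geq0}\cdot(-(K+B))$ meets $\Amp(X)$, by hyperplane separation: if $G\cap\Amp(X)=\emptyset$, a separating class $L$ would be positive on $\Amp(X)$ (hence $L\in\ol\NE(X)$) and negative on $G\setminus\{0\}$, forcing $(K+B)\cdot L>0$ and $D\cdot L<0$ simultaneously, contradicting the positivity of $D$ on $\ol\NE(X)_{K+B\geq0}\setminus\{0\}$. An ample $H\in G$ then has the form $aD-b(K+B)$ with $a,b>0$, and after rescaling one reads off $\alpha$ and $c$. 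Your duality computation is essentially this separation argument read in reverse; it is more structural but requires the extra care with closures, whereas the paper's argument is shorter and sidesteps that issue entirely.
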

\begin{proof}
Let $G$ be the $2$-dimensional closed convex cone in $\Nu^1(X)$ spanned by $D$ and $-(K+B)$.
It is enough to prove that the $\emptyset\neq G\cap\Amp(X)$.
Indeed, a sufficiently large ample divisor $H\in G\cap\Amp(X)$ satisfies the required conditions.

Suppose that $G\cap \Amp(X)=\emptyset$. Then there exists a curve class $L$ which separates
the two cones: $L\cdot D'<0$ for all $D'\in G\setminus\{0\}$ and $L\cdot D''>0$ for all
$D''\in \Amp(X)$.
By the second inequality, $L\in\ol\NE(X)$. The first inequality with
$D'=-(K+B)$ gives $L\in\ol\NE(X)_{K+B>0}$.
However the first inequality with $D'=D$ also gives $L\cdot D<0$, contradicting the fact that
$D$ is positive on $\ol\NE(X)_{K+B>0}$.\QED
\end{proof}

Conversely, it is easy to see that the divisors $D\in\partial\ol{\Mob}(X)$ of the form
$D\equiv K+B+H$ for an ample divisor $H$ are $\mov1$-co-bounding divisors of some $\mov1$-co-extremal face.

\begin{proposition}\label{prop-finite NM^1}
Let $(X,B)$ be a $\Q$-factorial klt pair.
Consider the cone
$$V=\ol\NE(X)_{K+B+H\geq 0}+\ol{\bNM}^1(X)$$ for some ample divisor $H$
such that $(X,B+H)$ is klt.
Then there exists a finite set $\{C_i\}$ of $\bmov1$-curves of $X$ such that
for any $\mov1$-co-bounding divisor $D$ for some $\mov1$-co-extremal face of the cone $\ol{\bNM}^1(X)$ for
$(X,B+H)$, $[C_i]\cdot [D]=0$ for some $C_i$.
\end{proposition}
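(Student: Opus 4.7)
The strategy is to parametrize $\mov1$-co-bounding divisors using Lemma \ref{lem-quasibounding div}, to control the possible birational models by the finiteness in Theorem \ref{thrm-bchm decomposition}, and then on each model to extract a $\mov1$-curve from the divisorial component of $\mathbf{B}_+$ via Nakamaye's theorem and the base-point free theorem. This parallels the strategy of \cite{araujo,leh} applied to divisorial rather than Mori fibre contractions.

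Set up as follows. Fix a finite-dimensional $\Q$-rational subspace $U$ of Weil $\R$-divisors on $X$ containing $B$ and $H$ and whose classes span $\Nu^1(X)$, together with a small rational ample $H_0$. Applying Theorem \ref{thrm-bchm decomposition} to $\mathcal{E}_{H_0}\cap U$ produces finitely many birational maps $\varphi_1,\ldots,\varphi_p:X\dashrightarrow X_i$. Given a (big) $\mov1$-co-bounding divisor $D$ for $(X,B+H)$, Lemma \ref{lem-quasibounding div} together with Lemma \ref{lem-ample klt} yields $\alpha D\equiv K+B''$ with $B''=B+H+cH'$ a big klt boundary that, after modifying $H'$ within its $\R$-linear equivalence class, lies in $\mathcal{E}_{H_0}\cap U$. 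By Lemma \ref{lem-D in baoundary Mob} and Theorem \ref{thrm-mob bnm dual}, $D\in\partial\ol\Mob(X)$, so Lemma \ref{lem-same mobile} shows $\mathbf{B}_-(D)$ has no divisorial component; Theorem \ref{thrm-from bchm}(4) therefore forces the log terminal model of $(X,B'')$ to be one of the $\varphi_i$, and this model is small.

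On such a model, $D_i:=(\varphi_i)_*D$ is nef and big and satisfies $\alpha D_i\equiv K_{X_i}+(\varphi_i)_*B''$ with $(\varphi_i)_*B''$ big and klt, so the base-point free theorem guarantees that $D_i$ is semiample and defines a contraction $g:X_i\to Y$. By Proposition \ref{prop-component under small}, $\mathbf{B}_+(D_i)$ has a prime divisorial component $E$, and Nakamaye's theorem (Theorem \ref{thrm-nakamaye}) gives $D_i^{n-1}\cdot E=0$. Hence $\dim g(E)<n-1$ and $g|_E$ has positive-dimensional general fibres; any curve $C$ in such a fibre is a $\mov1$-curve on $X_i$, so a $\bmov1$-curve of $X$, and $[C]\cdot[D]=D_i\cdot C=0$.

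The main obstacle is the uniform finiteness of the curves: producing a single finite list $\{C_k\}$ that works over all co-bounding $D$. The plan is to refine the BCHM decomposition inside each polytope $\mathcal{W}_j$ on which the log terminal model is $\varphi_i$, subdividing further according to the log canonical model of $(X,B'')$ --- equivalently, according to the semiample contraction $g:X_i\to Y$ attached to $D_i$. This secondary subdivision is again a finite rational-polyhedral refinement by a standard geography-of-models argument (compare \cite{chsh}). Each maximal cell determines a single morphism $g$ up to isomorphism, hence a single divisor $E$ and a single representative general fibre curve $C_k$. Verifying that these finitely many curves cover every co-bounding divisor, including those on lower-dimensional faces of the refinement where the attached contraction degenerates, and extending the argument to non-big $D$ by density, are the points requiring the most care.
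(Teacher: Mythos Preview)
Your proposal is correct and follows essentially the same approach as the paper: parametrize $\mov1$-co-bounding divisors via Lemma~\ref{lem-quasibounding div}, pass to a small log terminal model by Theorem~\ref{thrm-from bchm} (smallness forced by $D\in\partial\ol\Mob(X)$), extract a $\bmov1$-curve from the fibres of the log canonical contraction over the divisorial component of $\mathbf{B}_+$, and conclude finiteness from Theorem~\ref{thrm-bchm decomposition}. The only differences are cosmetic: the paper cites \cite[Proposition~1.5]{bbp} rather than Nakamaye's theorem to see that this divisorial component is $\psi$-exceptional, and it handles the non-big case directly via the Mori fibre space structure of $\psi$ rather than by your density argument --- the latter is cleaner, since it is not immediate that every non-big co-bounding divisor is approximated by big ones, whereas the Mori fibre curves arise from the same BCHM finiteness without further work.
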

\begin{proof}
We may assume that $K+B+H\not\in\ol\Mob(X)$.
Otherwise, $K+B+H$ is nonnegative on $\ol\bNM^1(X)$ by Theorem \ref{thrm-mob bnm dual}
and there would be no $\mov1$-co-extremal rays.
Let $D$ be a $\mov1$-co-bounding divisor as in the statement.
Then by Lemma \ref{lem-quasibounding div},
there exists an ample divisor $A$ such that $K+B+H+A$ is ample and
$\alpha D\equiv K+B+H+cA$ for $\alpha>0$ and $0<c<1$. By Lemma \ref{lem-ample klt},
we may assume that the pair $(X,B+H+A)$ is klt.
By Theorem \ref{thrm-from bchm}, there exist a log terminal model $\varphi:X\dashrightarrow X'$
of $(X,B+H+cA)$, which is an isomorphism in codimension $1$ since $K+B+H+cA\in\partial\ol\Mob(X)$.

There also exists a contraction $\psi:X'\rightarrow Y$ which is either
a birational morphism contracting a divisor
or has a Mori fiber space structure where $K_{X'}+B_{X'}+H_{X'}+cA_{X'}$
vanishes on every curve contracted by $\psi$.
If $K+B+H+cA$ is big, then
there exists a divisorial component $E\subseteq\mathbf B_+(K_{X'}+B_{X'}+H_{X'}+cA_{X'})$
by Lemma \ref{lem-D in baoundary Mob} and Proposition \ref{prop-component under small}.
By \cite[Proposition 1.5]{bbp}, the divisor $E$ is $\psi$-exceptional
and there exists a $\mov1$-curve $C'$ on $X'$
contracted by $\psi$.
If $K+B+H+cA\in\partial\ol\Eff(X)$, then $\psi$ has a Mori fiber space structure and
a movable curve $C'$ is contracted $\psi$.
Thus, in either case, we obtain a $\bmov1$-curve $C'$ of $X$ such that
$[C']\cdot [D]=0$.
By the finiteness (Theorem \ref{thrm-bchm decomposition}),
as we vary $D$ in $\mathcal E_H$ such that $D\in\partial\ol\Mob(X)$,
we obtain only finitely many maps $\psi\circ\varphi$ and consequently finitely many
$\bmov1$-curves.\QED
\end{proof}

\begin{proof}[Proof of Theorem \ref{thrm-cone thrm NM^1}]
We may assume that $K+B\not\in\ol\Mob(X)$.
Otherwise, $K+B$ is nonnegative on $\ol\bNM^1(X)$ by Theorem \ref{thrm-mob bnm dual}
and there would be no $\mov1$-co-extremal rays.
We may assume that $(X,B+H)$ is klt by Lemma \ref{lem-ample klt}.
Let $\{\varepsilon_j\}$ be a sequence of strictly decreasing positive numbers converging to $0$.
Let $\{C_{ji}\}_{i\in I_j}$ be the finite set of all $\bmov1$-curves for $(X,B+\varepsilon_j H)$
obtained as in Proposition \ref{prop-finite NM^1} using Theorem \ref{thrm-from bchm} .
Then clearly,
$$
\ol\NE(X)_{K+B+\eps_jH\geq0}+\ol{\bNM}^1(X)\supseteq
\ol{\NE}(X)_{K+B+\eps_jH\geq0}+\ol{\sum_{i\in I_j}\R_{\geq0}\cdot[C_{ji}]}.
$$
Suppose that the strict inclusion $\supsetneq$ holds.
Then there exists a $\mov1$-co-extremal ray $R$ for $(X,B+\eps_j H)$ such that
$R\setminus\{0\}$ is disjoint from
$\ol{\NE}(X)_{K+B+\eps_jH\geq0}+\ol{\sum_{i\in I_j}\R_{\geq0}\cdot[C_{ji}]}$.
If $D$ is a $\mov1$-co-bounding divisor of $R$,
then by Lemma \ref{lem-quasibounding div},
there exists an ample divisor $A$ such that $K+B+\eps_jH+A$ is ample
and $\alpha D\equiv K+B+\eps_jH+cA$ for $\alpha>0$ and $0<c<1$.
Since we may assume that
$(X,B+\eps_jH+cA)$ is klt, by applying Theorem \ref{thrm-from bchm} on $(X,B+\eps_jH+cA)$,
we obtain a $\bmov1$-curve $C$ of $X$
(as in the proof of Proposition \ref{prop-finite NM^1}) such that $R=\R_{\geq 0}\cdot[C]$.
Since $R\not\in\{\R_{\geq 0}\cdot [C_{ji}]\}$
and $R$ is also a $\mov1$-co-extremal ray for $(X,B+\eps_j H)$,
it is a contradiction. So the second equality holds.

Suppose that the set $\cup_{j\in \mathbb N} I_{j}$ is infinite.
Since $I_j\subseteq I_{j+1}$ for all $j$, we may assume that $I_j\subsetneq I_{j+1}$ for all $j$.
By taking the limit $j\rightarrow \infty$, we obtain the second equality of the cones
and the last statement.\QED
\end{proof}

\bs

\begin{proof}[Proof of Theorem \ref{thrm-contraction thrm NM^1}]
We may assume that $K+B\not\in\ol\Mob(X)$.
Otherwise, $K+B$ is nonnegative on $\ol\bNM^1(X)$ by Theorem \ref{thrm-mob bnm dual}
and there would be no $\mov1$-co-extremal rays.
For a fixed $\mov1$-co-extremal ray $R$, by Lemma \ref{lem-quasibounding div}, there exists an ample divisor $H$ such that
$K+B+H$ is ample and $D=K+B+cH$ ($0<c<1$) is
a $\mov1$-co-bounding divisor for $R$.
We may assume that $(X,B+H)$ is klt by Lemma \ref{lem-ample klt}.
By Theorem \ref{thrm-from bchm}, there exists a log terminal model
$\varphi:X\dashrightarrow X'$ of $(X,B+cH)$ and
since $D\equiv K+B+cH\in\partial\ol{\Mob}(X)$,
the birational map $\varphi:X\dashrightarrow X'$ is an isomorphism in codimension $1$.

If $D\in\partial\ol\Eff(X)$, then the ray $R$ is a co-extremal ray of $\ol\NM(X)$ for $(X,B)$.
By \cite{bchm}, there exists a Mori fiber space structure
$X'\rightarrow Y$ and
the statements follow from Theorem \ref{thrm-contraction thrm NM}.
Assume that $D\in\Int\ol\Eff(X)$.
Then as in the proof of Proposition \ref{prop-finite NM^1},
the ray $R$ is spanned by a $\mov1$-curve $C'$ on $X'$
(which is not movable and is a $\bmov1$-curve of $X$) and
its associated contraction $\psi:X'\rightarrow Y$ is divisorial.
Now by the uniqueness of the (lc) model $Y$ (Theorem \ref{thrm-from bchm})
for the $\mov1$-co-bounding divisor $D=K+B+cH$ of $R$,
we obtain the statements (1) and (2).
\QED
\end{proof}

\begin{remark}
As illustrated in the Figure \ref{figure}, there may be an extremal ray $R$
of $\ol{\bNM}^1(X)$ which is not $\mov1$-co-extremal, but co-extremal.
This ray does not appear in the expression
$\ol\NE_{K+B\geq 0}(X)+\ol{\sum_{i\in I}\R_{\geq 0}\cdot[C_i]}$ in Theorem \ref{thrm-cone thrm NM^1}.
However,  the statements of Theorem \ref{thrm-contraction thrm NM^1}
also hold for this ray by Theorem \ref{thrm-contraction thrm NM}.
\end{remark}

In the statements of Theorem \ref{thrm-contraction thrm NM^1}, if $K+B$ is not big, then
$\psi:X'\rightarrow Y$ is a Mori fibration and this is a resulting model of the given pair $(X,B)$.
Note that if $K+B\in\partial\ol\Eff(X)$, then $K_{X'}+B_{X'}$ is $\psi$-trivial and $Y$
is the lc Iitaka model of $(X,B)$.
If $K+B$ is big and $K+B\in\partial\ol{\Mob}(X)$, then $(X',B_{X'})$ is a resulting model
which is a log terminal model of $(X,B)$
and the contraction $\psi:X'\rightarrow Y$ is the lc contraction to the lc model $Y=X\cn$ of
$(X,B)$. For all other cases, namely, when $K+B$ is big but not in $\ol\Mob(X)$,
the divisorial contraction $\psi$ in Theorem \ref{thrm-contraction thrm NM^1} is only one of the intermediate modifications of the LMMP.
\bs

\begin{remark}
If $K+B\in\ol\Mob(X)$, then the cone $\ol{\bNM}^1(X)$
does not have any $\mov1$-co-extremal faces.
However, if $K+B\in\partial\ol\Mob(X)$, then $\ol{\bNM}^1(X)$
has extremal faces in  $\ol\NE(X)_{K+B=0}$.
If $K+B$ is big or $B\in\Int\ol\Mob(X)$, then some of such faces $F$ are
$\mov1$-co-extremal for some klt pair
and Theorem \ref{thrm-contraction thrm NM^1} holds for these rays too.
Indeed, suppose $\ol\bNM^1(X)\subseteq\ol\NE(X)_{K+B\geq 0}$ and
let $F$ be an extremal face of $\ol{\bNM}^1(X)$ in $\ol\NE(X)_{K+B=0}$.
If $K+B$ is big, then $K+B\equiv H+E$ for some ample $H$ and effective $E$.
For a small $\varepsilon>0$, $K+B+\varepsilon E$ is big and $(X,B+\varepsilon E)$ is still klt.
However, $K+B+\varepsilon E\not\in\ol{\Mob}(X)$ since
we can easily check that $F$ is $(K+B+\eps E)$-negative and
$\ol{\NE}(X)_{K+B+\varepsilon E=0}$ does not intersect with
the supporting plane $\{[C]\in\Num1(X)\;|\;C\cdot (K+B)=0\}$.
Therefore, $F$ is a $\mov1$-co-extremal face of $\ol{\bNM}^1(X)$ for the pair
$(X,B+\varepsilon E)$ and $K+B$ is a $\mov1$-co-bounding divisor for $F$.
Since the extremal rays of $F$ are $\mov1$-co-extremal rays,
Theorem \ref{thrm-cone thrm NM^1} and Theorem \ref{thrm-contraction thrm NM^1} can be applied to this case.
The similar argument works for the case when $B\in\Int\ol\Mob(X)$ (cf. Remark \ref{remk-nonexposed ray}).
\end{remark}

\bs

\begin{question}
In \cite{baty}, Batyrev conjectured that the co-extremal rays
in Theorem \ref{thrm-cone thrm NM} do not accumulate away from $\ol\NE(X)_{K+B=0}$.
Similarly, we can ask whether the $\mov1$-co-extremal rays in Theorem \ref{thrm-cone thrm NM^1}
can accumulate away from $\ol\NE(X)_{K+B=0}$.
For the results related to the conjecture of Batyrev or the cone $\ol{\NM}(X)$, see
\cite{araujo},\cite{barkow},\cite{baty},\cite{bchm},\cite{leh},\cite{qihong}.
\end{question}

% BibTeX users please use one of
%\bibliographystyle{spbasic}      % basic style, author-year citations
%\bibliographystyle{spmpsci}      % mathematics and physical sciences
%\bibliographystyle{spphys}       % APS-like style for physics
%\bibliography{}   % name your BibTeX data base

% Non-BibTeX users please use

\end{document}